\newcommand{\R}{\mathbb{R}}
\newcommand{\dom}{\mathrm{Dom}\;}
\newcommand{\interior}{\mathrm{Int}\;}
\newcommand{\ima}{\mathrm{Im}\;}
\newcommand{\Ker}{\mathrm{Ker}\;}
\begin{document}
\title[ Positive periodic solution of computer virus model]
{Sufficient conditions for existence of positive periodic solution of
a generalized nonresident computer virus model}

\author[A. Coronel, F. Huancas, M. Pinto]
{An{\'\i}bal\ Coronel, Fernando Huancas, Manuel Pinto}

\address{An{\'\i}bal Coronel \newline
GMA, Departamento de Ciencias B\'asicas,
Facultad de Ciencias, Universidad del B\'{\i}o-B\'{\i}o,
Campus Fernando May, Chill\'{a}n, Chile}
\email{acoronel@ubiobio.cl}

\address{Fernando Huancas \newline
GMA, Departamento de Ciencias B\'asicas,
Facultad de Ciencias, Universidad del B\'{\i}o-B\'{\i}o,
Campus Fernando May, Chill\'{a}n, Chile}
\email{fihuanca@gmail.com}

\address{Manuel Pinto \newline
Departamento de Matem\'aticas, Facultad de Ciencias,
Universidad de Chile, Chile,}
\email{pintoj.uchile@gmail.cl}

\keywords{Computer viruses model, periodic solutions,positive solutions}

\begin{abstract}
 In this paper, we introduce a nonresident computer virus model
and prove the existence of at least one positive periodic solution. 
The proposed model is based on a biological approach and is
obtained by considering that all rates
(rates that the computers are disconnected from the Internet,
the rate that the computers are cured, etc)
are  time dependent real functions.
Assuming that the initial condition is a positive vector and  the
coefficients are positive $\omega-$periodic
and applying the topological degree arguments we deduce that 
generalized nonresident computer virus model has
at least one positive $\omega-$periodic solution. The proof
consists of  two big parts. First,
an appropriate change of variable which conserves the periodicity property and implies
the positive behavior. Second, a reformulation of transformed system as an operator
equation which is analyzed  by applying the continuation theorem of
the coincidence degree theory.
\end{abstract}

\maketitle
\numberwithin{equation}{section}
\newtheorem{definition}{Definition}[section]
\newtheorem{theorem}{Theorem}[section]
\newtheorem{lemma}[theorem]{Lemma}
\newtheorem{proposition}[theorem]{Proposition}
\allowdisplaybreaks

\section{Introduction}
In the last decades, the study of widespread infection of the computers connected
to internet has attracted the interest of several researchers
(see for instance
\cite{chen_2015,dong_2012,gan_2014_nd,gan_2014_aaa,gen_2014,
gan_2014_cnsns,gan_2013,gan_2012,murray_1998,muroya_2015,
muroya_2014_ijcm,muroya_2014,nyamordi_2012}). It is well known
that the first appearance of computer viruses occurs in 1980 and the formalization 
of the computer virus problem and the related concepts were developed and presented
independently by Solomon~\cite{solomon_1993} and Cohen~\cite{cohen_thesis,cohen_1987}. 
Afterwards, many studies with diverse 
themes, but focused on the problem. In a broad sense, the unification of these 
works is that the main effort is the development of mathematical models. However,
the difference is the theoretical basis of the models are disperse and diverse, since
are constructed following the existing analogies with other traditional approaches 
(Markov chains \cite{amador_2013,billings_2002}, 
graph theory \cite{goldberg_1998,goldberg_1996}, dynamical systems 
\cite{yang_preprint,yang_2013,han_2010,hu_2015,li_2008,peng_2013,
ren_2012,ren_2012_chaos,ren_2012_narwa,yang_2012,_2012,zhang_2015,zhu_2012}) 
of the epidemiological analysis on large populations.

Kephart and White, following the ideas suggested by Cohen~\cite{cohen_1987}
and Murray~\cite{murray_1998},
introduce the first propagation model of computer virus in~\cite{kephart_ieee_proc_1993}
(see also \cite{kephart_ieee_1993}). The assumptions 
given in~\cite{kephart_ieee_proc_1993}
permits the dynamic modelling under deterministic models based on differential
equations and stochastic models based on Markov chains. However, 
in~\cite{kephart_ieee_proc_1993} there is not a clear
distinction when stochastic or deterministic approaches are the most preferable to
describe the dynamics of computer virus spreading. Since the work of Billings
et al.~\cite{billings_2002} and Amador and Artalejo~\cite{amador_2013}, 
these facts are clearly disembowel and, roughly speaking, 
the stochastic models are more accurate in the case of local network of small or moderate
size and the deterministic models in the case of the Internet. More specific differences 
on stochastic and deterministic points of view are listed on~\cite{amador_2013}.

In this paper, we are interested in the deterministic model proposed by 
for nonresident virus propagation. We recall that a nonresident virus
is conceptually defined as the virus which does not store or execute itself 
from the computer memory. Yang et al.~\cite{yang_preprint,yang_2013} 
proposed the model considering two 
consecutive phases: (i) {\it The latent phase,} in which the virus has not
yet been loaded into memory  and the
infected computer can infect other computers through file transmission or web browsing;
and 
(ii) {\it The attack  phase,} in which the virus has been run and
 the infected computer can infect other computers through infecting 
new hosts when those files are accessed by other programs or the operating
system itself. Indeed, following the ideas and the notation of \cite{yang_2013},
let us consider that 
the varying total numbers of computers in the network are further divided
at any time $t$ into three compartments denoted by
$S(t),L(t)$ at $A(t)$. Here, 
$S(t)$ denotes the average numbers 
of uninfected computers (susceptible computers) at time $t$,
$L(t)$ denotes the average numbers of infected computers (latent computers) 
in which viruses are not yet
loaded in their memory at time t; and 
$A(t)$ denotes the average numbers of infected computers (infectious computers)
in which viruses are
located in memory  at time t. Thus, assuming that the following hypotheses
\cite{muroya_2015}:
\begin{enumerate}
  \item[(H1)] All newly accessed computers are virus-free.
\item[(H2)] All viruses staying in computers are nonresident.
\item[(H3)] External computers are accessed to the Internet at 
positive constant number $b$ at each time~$t$, and the uninfected, latent
computers and infectious computers of internal computers are 
disconnected from the Internet also at rates $\mu_1$, $\mu_2$, and
 $\mu_3$, respectively at each time~$t$.
\item[(H4)] Users of latent computers cannot perceive
the existence of virus, so latent computers cannot get cured.
\item[(H5)] The numbers of internal computers infected
at time $t$ increases by $\beta_1SL+\beta_2SA$, where  $\beta_1$ and  $\beta_2$
are positive constants.
\item[(H6)] Nonresident viruses within latent computers are
loaded into memory at positive constant rate $\gamma_1$, 
and nonresident viruses
within infectious computers transfer control to the 
application program at positive constant rate $\gamma_2$.
\item[(H7)] Latent computers are cured at positive 
constant rate $\gamma_1$, whereas infected computers are cured at positive constant
rate $\gamma_2$.
\item[(H8)] $\alpha_1$ and $\alpha_2$ are the rates of 
nonresident viruses within latent computers are loaded into
memory and nonresident viruses
within infectious computers transfer control to the application program, respectively.
\end{enumerate}
holds, the following ordinary differential equation system
\begin{subequations}
\label{eq:virus_model_original}
\begin{eqnarray}
\frac{dS(t)}{dt}
&=& b- \mu_1S(t)-\beta_1S(t)L(t)-
	\beta_2S(t)A(t)+\gamma_1L(t)
	+\gamma_2A(t),
	\label{eq:virus_model_original:S}\\
\frac{dL(t)}{dt}
&=&\beta_1S(t)L(t)+ \beta_2S(t)A(t)
	+\alpha_2A(t)-[\mu_2
	+\alpha_1+\gamma_1]L(t),
	\label{eq:virus_model_original:L}\\
\frac{dA(t)}{dt}
&=&\alpha_1L(t)-[\mu_3
+\alpha_1+\gamma_2]A(t),
\label{eq:virus_model_original:A}
\end{eqnarray} 
\end{subequations}
is deduced as mathematical model for nonresident computer virus 
propagation  with varying total numbers of computers in the
network.

Recently, in \cite{muroya_2015} considering that the 
initial condition $(S(0),L(0),A(0))\in\mathbb{R}^3_+$
and all parameters $b,\alpha_i,\beta_i,\mu_i$ and $\gamma_i$ with $i=1,2,3,$ are 
all real positive constants with $\mu_1\le \min\{\mu_2,\mu_3\}$, the authors
prove the global stability of \eqref{eq:virus_model_original} establishing that
there is two dynamical globally asymptotic stability
possibilities: converging to an uninfected or infected equilibrium,
depending if the reproduction number $R_0$ is such that $R_0\le 1$ 
or $R_0>1$, respectively. Moreover,  other properties
of the model \eqref{eq:virus_model_original} and close models
based on biological models
are studied in several works  
and also some modifications of the model
are proposed recently (see for instance 
\cite{ren_2014,ren_2013,ren_2014_aaa,muroya_2014_ijcm,
muroya_2014,nyamordi_2012,song_2014,wierman_2004,yang_2014,
zhang_2013,zhu_amc_2012}). 
However, there are some other properties of the dynamical phenomena which are 
well established  in epidemiological models, but are not analyzed yet
for \eqref{eq:virus_model_original}.
For instance, in the best of our knowledge, there is not
a result for the existence 
of positive periodic solutions 
(see \cite{hiv_reference} for a Hepatitis model). 

In this paper, we consider all rates are time dependent, i.e.
the assumptions on (H3), (H5)-(H8) are more general in the 
sense that the parameters $b,\alpha_i,\beta_i,\mu_i$ and $\gamma_i$
with $i=1,2,3,$ are time dependent  real functions. These new considerations
motivate the following generalized model:
\begin{subequations}
\label{eq:virus_model}
\begin{eqnarray}
\frac{dS(t)}{dt}
&=& b(t)- \mu_{1}(t)S(t)-\beta_{1}(t)S(t)L(t)
\nonumber\\
&&-
	\beta_{2}(t)S(t)A(t)+\gamma_{1}(t)L(t)
	+\gamma_{2}(t)A(t),
	\label{eq:virus_model:S}\\
\frac{dL(t)}{dt}
&=&\beta_{1}(t)S(t)L(t)+ \beta_{2}(t)S(t)A(t)
\nonumber\\
&&
	+\alpha_{2}(t)A(t)-[\mu_{2}(t)
	+\alpha_{1}(t)+\gamma_{1}(t)]L(t),
	\label{eq:virus_model:L}\\
\frac{dA(t)}{dt}
&=&\alpha_{1}(t)L(t)-[\mu_{3}(t)
+\alpha_{1}(t)+\gamma_{2}(t)]A(t).
\label{eq:virus_model:A}
\end{eqnarray} 
\end{subequations}
Now, in order to understand the dynamics we study the 
existence of positive periodic solution for \eqref {eq:virus_model}.

The main result of the paper is given by the following theorem:
\begin{theorem}
\label{teo:main}
Assume that the coefficients of the system \eqref{eq:virus_model}
satisfy the following hypothesis:
\begin{eqnarray}
\left.
\begin{array}{l}
\mbox{The initial condition $(S(0),L(0),A(0))\in\R^3_+$ and the coefficient functions}
\\
\mbox{$b,\mu_1,\beta_1,\beta_2,\gamma_1,\gamma_2,\alpha_1$ and $\alpha_2$
are positive, continuous, $\omega$-periodic on $[0,\omega]$ and}
\\
\hspace{2cm}
\displaystyle
\max_{t\in [0,\omega]}\left[\frac{\alpha_1(\alpha_2+\gamma_2)}
{(\alpha_1+\mu_2)}\right](t)
\le 
\min_{t\in [0,\omega]}(\mu_3+\alpha_2+\gamma_2)(t).
\end{array}
\right\}
\label{eq:hipot_H}
\end{eqnarray}
Then, the system \eqref {eq:virus_model}
has at least one positive $\omega$-periodic solution.
\end{theorem}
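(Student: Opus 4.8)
The plan is to follow the two-part strategy announced in the abstract: first a logarithmic change of variables to enforce positivity and preserve periodicity, then an application of Mawhin's continuation theorem from coincidence degree theory.

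First I would set $S(t)=\exp(x_1(t))$, $L(t)=\exp(x_2(t))$, $A(t)=\exp(x_3(t))$; since the exponential is strictly positive and each $x_i$ inherits $\omega$-periodicity, any $\omega$-periodic solution $x=(x_1,x_2,x_3)$ of the transformed system yields a positive $\omega$-periodic solution of \eqref{eq:virus_model}. Dividing \eqref{eq:virus_model:S}, \eqref{eq:virus_model:L} and \eqref{eq:virus_model:A} by $S$, $L$ and $A$ respectively, the system becomes
\[
\begin{aligned}
x_1' &= b\,e^{-x_1}-\mu_1-\beta_1 e^{x_2}-\beta_2 e^{x_3}+\gamma_1 e^{x_2-x_1}+\gamma_2 e^{x_3-x_1},\\
x_2' &= \beta_1 e^{x_1}+\beta_2 e^{x_1+x_3-x_2}+\alpha_2 e^{x_3-x_2}-(\mu_2+\alpha_1+\gamma_1),\\
x_3' &= \alpha_1 e^{x_2-x_3}-(\mu_3+\alpha_1+\gamma_2),
\end{aligned}
\]
where all coefficients are understood to be evaluated at $t$.

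Next I would recast this as an operator equation suited to Mawhin's continuation theorem. Let $X=Y=\{x\in C(\R,\R^3):x(t+\omega)=x(t)\}$ with norm $\|x\|=\sum_{i=1}^{3}\max_{t\in[0,\omega]}|x_i(t)|$, let $Lx=x'$ on $\dom L=\{x\in X:x\in C^1\}$, and let $Nx$ be the right-hand side above. Then $\Ker L$ is the space of constant vectors, $\ima L=\{y\in Y:\int_0^\omega y\,dt=0\}$ is closed of codimension three, so $L$ is Fredholm of index zero; the averaging maps $Px=\frac1\omega\int_0^\omega x\,dt$ and $Qy=\frac1\omega\int_0^\omega y\,dt$ are the required projectors, and a standard Arzel\`a--Ascoli argument shows that $N$ is $L$-compact on the closure of any bounded open set.

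The heart of the argument is the a priori bound. For any solution of $Lx=\lambda Nx$ with $\lambda\in(0,1)$, integrating each component over $[0,\omega]$ and using $\int_0^\omega x_i'\,dt=0$ produces three balance identities equating the time averages of the exponential terms to those of the coefficients. From the identity attached to $x_2'$ one reads off an upper bound for the average of $e^{x_1}$, from the one attached to $x_3'$ a control of $e^{x_2-x_3}$, and so forth; feeding these into the elementary inequality $\max_t x_i-\min_t x_i\le\int_0^\omega|x_i'|\,dt$ (whose right-hand side is itself estimated through the same identities) should yield uniform bounds $|x_i(t)|\le R_i$ independent of $\lambda$. I expect the lower bounds to be the main obstacle: ruling out $x_i\to-\infty$ (that is, $S,L,A\to0$) cannot be done equation by equation, and it is exactly here that hypothesis \eqref{eq:hipot_H} enters as the quantitative condition that makes the coupled chain of inequalities close, whereas the upper bounds follow comparatively easily from the dissipative terms.

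Finally, with the constants $R_i$ fixed I would take $\Omega=\{x\in X:\|x\|<R\}$ for $R$ larger than all the $R_i$ and than the norm of any constant zero of the averaged field, and verify the degree conditions on $\Omega$. Restricted to $\Ker L\cong\R^3$, the map $QN$ is the algebraic system given by the period-averaged right-hand sides; the bounds together with \eqref{eq:hipot_H} guarantee $QNx\ne0$ on $\partial\Omega\cap\Ker L$, and I would compute $\deg(JQN|_{\Ker L},\Omega\cap\Ker L,0)\ne0$ (with $J=\mathrm{id}$) by homotoping to a simpler field whose unique zero carries a Jacobian of definite sign. Mawhin's continuation theorem then delivers $x\in\overline{\Omega}\cap\dom L$ with $Lx=Nx$, and inverting the change of variables produces the desired positive $\omega$-periodic solution of \eqref{eq:virus_model}.
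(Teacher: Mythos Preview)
Your overall architecture---logarithmic change of variables, the operator framework $Lx=x'$, $Nx=$ right-hand side on the space of $\omega$-periodic functions, Fredholm index zero, averaging projectors, $L$-compactness via Arzel\`a--Ascoli, a priori bounds, then the degree computation by homotopy---is exactly the route the paper takes. The transformed system, the spaces, and the projectors match line for line.

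Where your sketch diverges from the paper is in the a priori estimates, and here you have the roles reversed. You write that the upper bounds ``follow comparatively easily from the dissipative terms'' and that hypothesis~\eqref{eq:hipot_H} is what closes the chain for the \emph{lower} bounds. In the paper it is the other way around. Integrating the $x_2'$-equation indeed bounds $\int_0^\omega e^{x_1}\,dt$, but the raw integrated identities do \emph{not} yield bounds on $\int_0^\omega e^{x_2}\,dt$ or $\int_0^\omega e^{x_3}\,dt$: the $x_1'$-identity has $\int b\,e^{-x_1}$ on one side, which is not a priori controlled, and the $x_3'$-identity only controls $\int e^{x_2-x_3}$. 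The paper's device is to multiply each equation $x_i'=\cdots$ by $e^{x_i}$ \emph{before} integrating, producing three new identities that couple $\int e^{x_1}$, $\int e^{x_2}$, $\int e^{x_3}$ linearly. Hypothesis~\eqref{eq:hipot_H} is precisely the condition that the resulting coefficient $\theta$ (see \eqref{eq:notation_theta}) lies in $(0,1)$, so that the self-referential inequality for $\int e^{x_3}$ can be solved; the bounds on $\int e^{x_2}$ and $\int e^{x_1}$ then follow by back-substitution. Only after these integral upper bounds are in hand does the inequality $x_i(t)\le x_i(\xi_i)+\int_0^\omega|x_i'|\,dt$ give pointwise upper bounds.

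The lower bounds, by contrast, are obtained by evaluating each equation at the minimum $\tau_i$ of $x_i$ (where $x_i'(\tau_i)=0$) and using the already-established pointwise upper bounds on the other components; no appeal to \eqref{eq:hipot_H} is needed at that stage. So your instinct that the estimates are coupled is correct, but the coupling and the use of the hypothesis occur in the upper-bound step, via the multiplication-by-$e^{x_i}$ trick that your sketch omits.
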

\noindent
To prove the Theorem~\ref{teo:main} we apply the coincidence degree  theory.
The proof is self-contained and presented in section~\ref{sec:proof}, by 
introducing several lemmas which implies that the hypotheses of continuation
theorem are valid in this context. Moreover, 
it is  worthwhile to remark that another important result of the paper is the
a priory estimates given on Theorem~\ref{teo:contradiction}, which is useful to get
the contradiction in one of the steps of  the proof of Theorem~\ref{teo:main}.

\section{Proof of main result: Theorem~\ref{teo:main}}
\label{sec:proof}

The proof is given in the following  two steps
\begin{enumerate}
 \item[(i)] We introduce a change of variable which ``conservs'' the periodicity property
 and implies the positive behavior (see Theorem~\ref{prop:sist_eqiv}).
 \item[(ii)] We prove the existence of periodic solutions of the transformed system
 by applying  the topological degree arguments.
\end{enumerate}

\subsection{Reformulation of original system  \eqref{eq:virus_model}}

\begin{theorem}
\label{prop:sist_eqiv}
Assume that the  system \eqref{eq:virus_model}
has a solution. Then, 
$\{S,L,A\}$ is a solution of the system \eqref{eq:virus_model}
if and only if $\{S^{\ast},L^{\ast},A^{\ast}\}$ defined as
follows
\begin{eqnarray}
S(t)=\exp(S^{\ast}(t)),
\quad
L(t)=\exp(L^{\ast}(t)),
\quad
A(t)=\exp(A^{\ast}(t)),
\label{eq:change_of_variable}
\end{eqnarray}
is a solution of the following system
\begin{subequations}
\label{eq:virus_model_equiv}
\begin{eqnarray}
\frac{dS^{\ast}(t)}{dt} 
&=& b(t)\exp({-S^{\ast}(t)})
-\beta_{1}(t)\exp({L^{\ast}(t)})-\beta_{2}(t)\exp({A^{\ast}(t)})
\nonumber\\
&&
+\gamma_{1}(t)\exp\Big({L^{\ast}(t)-S^{\ast}(t)}\Big)
+\gamma_{2}(t)\exp\Big({A^{\ast}(t)-S^{\ast}(t)}\Big)- \mu_{1}(t),
\qquad\mbox{${}$}
\label{eq:virus_model_equiv:s}\\
\frac{dL^{\ast}(t)}{dt}
&=&\beta_{1}(t)\exp({S^{\ast}(t)})
+ \beta_{2}(t)\exp\Big({S^{\ast}(t)+A^{\ast}(t)-L^{\ast}(t)}\Big)
\nonumber\\
&&
+\alpha_{2}(t)\exp\Big({A^{\ast}(t)-L^{\ast}(t)}\Big)
-[\mu_{2}(t)
+\alpha_{1}(t)+\gamma_{1}(t)],
\label{eq:virus_model_equiv:l}\\
\frac{dA^{\ast}(t)}{dt}
&=&\alpha_{1}(t)\exp\Big({L^{\ast}(t)-A^{\ast}(t)}\Big)
-[\mu_{3}(t)+\alpha_{2}(t)+\gamma_{2}(t)].
\label{eq:virus_model_equiv:a}
\end{eqnarray}
\end{subequations}
In particular, we have that the following two assertions are valid:
\begin{enumerate}[(a)]
\item If the solution of  the system 
\eqref{eq:virus_model_equiv} 
is $\omega$-periodic, then the solution of \eqref{eq:virus_model}
is $\omega$-periodic.
\item If the system  \eqref{eq:virus_model_equiv} has a solution, then
\eqref{eq:virus_model} has a positive solution.
\end{enumerate}
\end{theorem}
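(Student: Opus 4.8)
The plan is to establish the equivalence by a direct logarithmic change of variables, exploiting that the map $x \mapsto \exp(x)$ is a smooth diffeomorphism from $\R$ onto $(0,\infty)$ with inverse $\ln$. Because this map is a bijection onto the strictly positive reals, the relation $S(t) = \exp(S^{\ast}(t))$ (and likewise for $L,A$) is invertible, namely $S^{\ast}(t) = \ln S(t)$, precisely when $S(t) > 0$; this invertibility is what turns the statement into an ``if and only if'' rather than a one-sided implication. I would first prove the equivalence of the two differential systems by a straightforward differentiation, and then read off assertions (a) and (b) as immediate consequences of two elementary properties of the exponential: it is positivity-valued and it preserves $\omega$-periodicity.

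For the core equivalence, I would differentiate the relation $S(t) = \exp(S^{\ast}(t))$ by the chain rule to obtain
\begin{equation}
\frac{dS(t)}{dt} = \exp(S^{\ast}(t))\,\frac{dS^{\ast}(t)}{dt} = S(t)\,\frac{dS^{\ast}(t)}{dt},
\end{equation}
so that $dS^{\ast}/dt = (1/S)\,dS/dt$, and analogously $dL^{\ast}/dt = (1/L)\,dL/dt$ and $dA^{\ast}/dt = (1/A)\,dA/dt$; this division is legitimate because $S,L,A$ are strictly positive. Substituting the right-hand side of \eqref{eq:virus_model:S} and dividing by $S$ converts each product or linear term into a ratio, e.g. $b/S$, $\gamma_1 L/S$, $\gamma_2 A/S$, and using $b/S = b\exp(-S^{\ast})$, $L/S = \exp(L^{\ast}-S^{\ast})$ and $A/S = \exp(A^{\ast}-S^{\ast})$ reproduces exactly \eqref{eq:virus_model_equiv:s}. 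The same computation applied to \eqref{eq:virus_model:L} and \eqref{eq:virus_model:A}, using the identities $SA/L = \exp(S^{\ast}+A^{\ast}-L^{\ast})$, $A/L = \exp(A^{\ast}-L^{\ast})$ and $L/A = \exp(L^{\ast}-A^{\ast})$, yields \eqref{eq:virus_model_equiv:l} and \eqref{eq:virus_model_equiv:a}. Since every manipulation is an algebraic equivalence (division by the strictly positive functions $S,L,A$, together with substitution through the bijective exponential), the chain of equalities can be read in either direction, which establishes both implications of the equivalence simultaneously.

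It then remains to record the two consequences. For (a), periodicity is inherited directly: if $S^{\ast}$ is $\omega$-periodic, then $S(t+\omega) = \exp(S^{\ast}(t+\omega)) = \exp(S^{\ast}(t)) = S(t)$, because $\exp$ is a fixed, time-independent function, and the same holds for $L$ and $A$. For (b), positivity is automatic: for any real-valued solution $\{S^{\ast},L^{\ast},A^{\ast}\}$ of \eqref{eq:virus_model_equiv}, the functions $S = \exp(S^{\ast})$, $L = \exp(L^{\ast})$, $A = \exp(A^{\ast})$ take values in $(0,\infty)$, and by the equivalence just proved they solve \eqref{eq:virus_model}; hence \eqref{eq:virus_model} admits a positive solution.

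I expect no deep obstacle here; the work is essentially careful bookkeeping of the substitutions. The only point requiring genuine attention is the passage from a solution of \eqref{eq:virus_model} to one of \eqref{eq:virus_model_equiv}, where the logarithm $S^{\ast} = \ln S$ is defined only when $S > 0$. This causes no difficulty under the stated setup, since the defining relation $S = \exp(S^{\ast})$ itself forces positivity (and, for positive initial data, positivity of the original solution is preserved by the flow); one simply invokes this before taking logarithms. A secondary, purely technical, point is regularity: the chain rule requires the solutions to be $C^1$, which holds because the right-hand sides of both systems are continuous in $t$ and smooth in the state variables.
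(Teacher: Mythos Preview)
Your proposal is correct and follows essentially the same approach as the paper: both arguments establish the equivalence by applying the chain rule to $S=\exp(S^{\ast})$ (the paper phrases this as multiplying each equation by $\exp(-S^{\ast})$, $\exp(-L^{\ast})$, $\exp(-A^{\ast})$, which is your division by $S,L,A$), and both derive (a) and (b) from the facts that $\exp$ preserves $\omega$-periodicity and takes values in $(0,\infty)$. Your additional remarks on the bijectivity of $\exp$ and the $C^{1}$ regularity needed for the chain rule are sound refinements but do not change the substance of the argument.
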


\begin{proof}
We follow the proof of  \eqref{eq:virus_model_equiv} by differentiation
of the new variables defined on \eqref{eq:change_of_variable}. To be more
precise, let us consider that $\{S,L,A\}$ is a solution
of \eqref{eq:virus_model}, then we can prove that
$\{S^{\ast},L^{\ast},A^{\ast}\}$ is a solution
of \eqref{eq:virus_model_equiv} by using \eqref{eq:change_of_variable}
and by multiplying the equations 
\eqref{eq:virus_model:S}, \eqref{eq:virus_model:L} and \eqref{eq:virus_model:A}
by $\exp(-S^{\ast}(t)),$ $\exp(-L^{\ast}(t))$ and $\exp(-A^{\ast}(t))$,
respectively. Conversely, if we assume that 
$\{S^{\ast},L^{\ast},A^{\ast}\}$ is a solution
of \eqref{eq:virus_model_equiv}, then by \eqref{eq:change_of_variable}
and differentiation we deduce that $\{S,L,A\}$ is a solution
of \eqref{eq:virus_model}. Indeed, to verify that  
of $S$ satisfies \eqref{eq:virus_model:S}, by \eqref{eq:virus_model_equiv:s} and \eqref{eq:change_of_variable}
 we have that
\begin{eqnarray*}
\frac{dS(t)}{dt}
&=&\exp({S^{\ast}(t)})\frac{dS^{\ast}(t)}{dt}
\\
&=&
\exp({S^{\ast}(t)})
\Big[
b(t)\exp({{-S^{\ast}(t)}})-
\mu_{1}(t)-\beta_{1}(t)\exp({{L^{\ast}(t)}})
\\
&&
-\beta_{2}(t)\exp({{A^{\ast}(t)}})
+\gamma_{1}(t)\exp({{L^{\ast}(t)}-{S^{\ast}(t)}})
+\gamma_{2}(t)\exp({{A^{\ast}(t)}-{S^{\ast}(t)}})\Big]
\\
&=&
b(t)- \mu_{1}(t)S(t)-\beta_{1}(t)S(t)L(t)-
	\beta_{2}(t)S(t)A(t)+\gamma_{1}(t)L(t)
	+\gamma_{2}(t)A(t).
\end{eqnarray*}
Similarly, we can verify that \eqref{eq:virus_model:L} and \eqref{eq:virus_model:A}
are satisfied.

Assuming that the functions
$\{S^{\ast},L^{\ast},A^{\ast}\}$ are $\omega$-periodic, then 
by \eqref{eq:change_of_variable} we can get that 
the functions $\{S^{\ast},L^{\ast},A^{\ast}\}$ are $\omega$-periodic,
since (for instance in the case of $S$) we have that
\begin{eqnarray*}
S(t+\omega)=\exp({S^{\ast}(t+\omega)})=\exp({S^{\ast}(t)})=S(t).
\end{eqnarray*}
Then (a) is proved. Now, we note that
(b) is a straightforward consequence of the definition
of $\{S^{\ast},L^{\ast},A^{\ast}\}$ given in \eqref{eq:change_of_variable}.
\end{proof}

\subsection{Existence of a periodic solutions for the system 
\eqref{eq:virus_model_equiv}}

In this subsection we prove the following theorem:
\begin{theorem}
\label{teo:exitencia_equiv}
Assume that the coefficients of the system \eqref{eq:virus_model_equiv}
satisfy the following hypothesis~\eqref{eq:hipot_H}.
Then, the system \eqref {eq:virus_model_equiv}
has at least one $\omega$-periodic solution.
\end{theorem}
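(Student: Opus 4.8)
The plan is to prove Theorem~\ref{teo:exitencia_equiv} by applying the continuation theorem (Mawhin's coincidence degree theory) to the transformed system \eqref{eq:virus_model_equiv}. The natural functional setting is the Banach space $X=Y=\{u\in C(\R,\R^3):u(t+\omega)=u(t)\}$ of continuous $\omega$-periodic vector functions $u=(S^\ast,L^\ast,A^\ast)$ with the supremum norm. I would define the linear operator $L:\dom L\subset X\to Y$ by $Lu=u'$ (with $\dom L$ the $C^1$ periodic functions) and the nonlinear operator $N:X\to Y$ given by the right-hand sides of \eqref{eq:virus_model_equiv:s}--\eqref{eq:virus_model_equiv:a}. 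The operator $L$ is a Fredholm mapping of index zero, with $\Ker L$ the constant functions $\cong\R^3$ and $\ima L=\{y\in Y:\int_0^\omega y(t)\,dt=0\}$; the projectors are the standard ones, $Pu=\frac1\omega\int_0^\omega u(t)\,dt$ and $Q=P$, so that finding an $\omega$-periodic solution is equivalent to solving the operator equation $Lu=Nu$.

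To invoke the continuation theorem, I must verify its two main hypotheses. First, for each $\lambda\in(0,1)$, every solution $u$ of $Lu=\lambda Nu$ must be shown to lie in an a~priori bounded open set $\Omega\subset X$; second, I must check that $QN$ has no zeros on $\partial\Omega\cap\Ker L$ and that the Brouwer degree $\deg(JQN,\,\Omega\cap\Ker L,\,0)$ is nonzero. The first step is the heart of the matter. For a solution of $Lu=\lambda Nu$ I would integrate each component equation over $[0,\omega]$: since $u$ is periodic the left side integrates to zero, yielding integral identities (balance laws) that bound the integrals $\int_0^\omega \exp(\cdot)\,dt$ of the exponential terms in terms of the coefficient functions. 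Using the standard device that a periodic $C^1$ function attains points where its derivative vanishes, together with the inequality $\max_t v(t)\le \frac1\omega\int_0^\omega v\,dt + \int_0^\omega |v'|\,dt$, I would convert these integral bounds into uniform pointwise upper and lower bounds on $S^\ast,L^\ast,A^\ast$. This is presumably where the a~priori estimate quoted as Theorem~\ref{teo:contradiction} enters and where the structural hypothesis \eqref{eq:hipot_H} is essential.

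I expect the main obstacle to be obtaining the \emph{lower} bounds on the components, and this is exactly where the inequality in \eqref{eq:hipot_H} should be decisive. Upper bounds tend to follow directly from the positivity of $b,\mu_i$ and the balance identities, but controlling the components from below---so that the exponentials $\exp(L^\ast-A^\ast)$, $\exp(A^\ast-L^\ast)$, etc.\ do not blow up or collapse---requires comparing the gain terms $\alpha_1(\alpha_2+\gamma_2)/(\alpha_1+\mu_2)$ against the loss rate $\mu_3+\alpha_2+\gamma_2$, which is precisely the content of the hypothesis. I anticipate that the argument proceeds by contradiction: assuming the components are not uniformly bounded below, one derives a violation of \eqref{eq:hipot_H} via Theorem~\ref{teo:contradiction}. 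Once the a~priori bounds are in hand, one enlarges them slightly to define the open bounded set $\Omega$, and the final degree computation reduces to an algebraic system $QNu=0$ on the finite-dimensional space $\Ker L\cong\R^3$; by choosing the homotopy $J=\mathrm{id}$ and verifying that this algebraic system has a unique solution in $\Omega\cap\Ker L$ with nonzero Jacobian sign, the Brouwer degree is nonzero, and the continuation theorem then guarantees at least one $\omega$-periodic solution of $Lu=Nu$.
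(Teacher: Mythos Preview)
Your proposal is correct and follows essentially the same route as the paper: the same Banach space, the same operators $L$ and $N$, the same averaging projectors $P=Q$, a~priori bounds obtained by integrating the $\lambda$-homotopy equations over a period (this is the content of Theorem~\ref{teo:contradiction}), and the same degree computation on $\Ker L\cong\R^3$ via a homotopy with $J=\mathrm{id}$.

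One point of expectation you should revise: hypothesis~\eqref{eq:hipot_H} is not used to secure the \emph{lower} bounds but the \emph{upper} ones. In the paper the integral identities \eqref{eq:model_times_exp} are combined to give an inequality of the form $\int_0^\omega e^{x_3}\le \theta\!\int_0^\omega e^{x_3}+C$, where $\theta$ is precisely the ratio appearing in \eqref{eq:hipot_H}; the hypothesis guarantees $\theta<1$, so the self-referential estimate closes and yields finite upper bounds on $\int_0^\omega e^{x_i}\,dt$. The lower bounds are then obtained constructively (not by contradiction) by evaluating the equations at the minimum points $\tau_i$ of $x_i$ and feeding in the upper bounds already established.
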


\subsubsection{The topological degree notation, concepts and results.}
$ $

In order to analyze the system \eqref{eq:virus_model_equiv}
we apply the topological degree arguments.
Indeed, for completeness of the presentation, 
we recall some notation, concepts and results of these theory
(see \cite{mawhin_book} for details).

\begin{definition}
\label{def:L_is_fredholm}
Let $X$ and $Y$ be normed vector spaces and  
$L:\dom L\subset X\to Y$  a linear operator. Then,  $L$ is called a
Fredholm operator of index zero, if the following assertions
\begin{eqnarray}
 {\rm dim}(\Ker L)= {\rm codim}(\ima L) <\infty
 \qquad
 \mbox{and}
 \qquad
 \mbox{$\ima L$ is closed in $Y$},
 \label{def:L_is_fredholm:1}
\end{eqnarray}
are valid.
\end{definition}

\begin{proposition}
\label{prop:projectorsPQ}
Let $X$ and $Y$ be normed vector spaces and  
$L:\dom L\subset X\to Y$ a linear operator.
If $L$ is a Fredholm mapping of index zero, then 
\begin{enumerate}[(i)]
 \item There are two
continuous projectors $P:X \to X$ and $Q: Y \to Y$ such that
$\ima P=\Ker L$ and $\ima L=\Ker Q=\ima (I-Q)$.

    \item  $L_{P}:=L |_{\dom L
\cap \Ker P}:(I-P)X\to \ima L$ is invertible and its inverse
is denoted by $K_{P}$.

\item There is an isomorphism $J:\ima Q\to \Ker L$.

\end{enumerate}

\end{proposition}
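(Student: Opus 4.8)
The plan is to construct every object explicitly from the two defining properties of a Fredholm operator of index zero recorded in \eqref{def:L_is_fredholm:1}: that $n:=\dim(\Ker L)=\mathrm{codim}(\ima L)$ is finite, and that $\ima L$ is closed in $Y$. The finiteness gives us enough room to use Hahn--Banach and bare dimension counting, while the closedness is exactly what I will need to make the projector $Q$ continuous.

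For the projector $P$ in (i), I would exploit that $\Ker L$ is $n$-dimensional. Fixing a basis $\{e_1,\dots,e_n\}$ of $\Ker L$, the Hahn--Banach theorem produces continuous functionals $f_1,\dots,f_n\in X^{*}$ dual to it, i.e. $f_i(e_j)=\delta_{ij}$. Setting $Px=\sum_{i=1}^{n}f_i(x)\,e_i$ yields a continuous linear map with $P^{2}=P$ and $\ima P=\Ker L$, whose kernel $\bigcap_{i=1}^{n}\Ker f_i$ is closed; hence $X=\Ker L\oplus\Ker P$ with both summands closed. For the projector $Q$ I would pick $y_1,\dots,y_n\in Y$ whose cosets form a basis of the $n$-dimensional quotient $Y/\ima L$, and put $Y_1=\mathrm{span}\{y_1,\dots,y_n\}$. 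The quotient map $\pi:Y\to Y/\ima L$ is continuous precisely because $\ima L$ is closed; its restriction $\pi|_{Y_1}$ is a linear bijection between finite-dimensional spaces, hence a homeomorphism with continuous inverse $\phi$. Then $Q:=\phi\circ\pi$ is a continuous projector with $\ima Q=Y_1$ and $\Ker Q=\Ker\pi=\ima L$, so that $\ima L=\Ker Q=\ima(I-Q)$. I expect securing the continuity of $Q$ to be the one genuinely delicate point, since it is the single place where the hypothesis that $\ima L$ be closed is indispensable: without it $\pi$ need not be continuous and the splitting need not be topological.

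For (ii), I would first note that $(I-P)X=\Ker P$, so $L_{P}=L|_{\dom L\cap\Ker P}$. Injectivity is immediate: if $L_{P}x=0$ then $x\in\Ker L=\ima P$ while simultaneously $x\in\Ker P$, and $\ima P\cap\Ker P=\{0\}$ forces $x=0$. For surjectivity, given $y\in\ima L$ I would choose $x\in\dom L$ with $Lx=y$ and split $x=Px+(I-P)x$; since $\Ker L\subset\dom L$ we have $Px\in\Ker L\subset\dom L$, hence $(I-P)x=x-Px\in\dom L\cap\Ker P$, and $L((I-P)x)=Lx-L(Px)=y$ because $Px\in\Ker L$. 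Thus $L_{P}$ is a bijection onto $\ima L$ and admits an inverse $K_{P}:\ima L\to\dom L\cap\Ker P$. The only subtlety worth flagging here is the membership $(I-P)x\in\dom L$, which is not automatic for an unbounded $L$ but follows because $\dom L$ is a subspace containing $\Ker L$.

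Finally (iii) is a pure dimension count: $\ima Q=Y_1$ and $\Ker L$ are both vector spaces of dimension $n$, so any linear map carrying a basis of $\ima Q$ to a basis of $\Ker L$ is a linear isomorphism $J:\ima Q\to\Ker L$, and being a map between finite-dimensional normed spaces it is automatically bicontinuous. This completes the three assertions.
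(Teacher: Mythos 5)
Your proof is correct, but there is nothing in the paper to measure it against: Proposition~\ref{prop:projectorsPQ} is stated there without proof, as recalled background from coincidence degree theory, with the reader sent to Gaines and Mawhin \cite{mawhin_book} for details. Your argument is the standard one from that literature, rendered in full: Hahn--Banach functionals dual to a basis of $\Ker L$ give the continuous projector $P$ with $\ima P=\Ker L$; a finite-dimensional complement $Y_1$ of $\ima L$ and the factorization $Q=\phi\circ\pi$ through the quotient give $Q$ with $\Ker Q=\ima L=\ima(I-Q)$; injectivity of $L_P$ follows from $\ima P\cap\Ker P=\{0\}$ and surjectivity from the splitting $x=Px+(I-P)x$, where you rightly flag that $(I-P)x\in\dom L$ because $\dom L$ is a subspace containing $\Ker L$; and the dimension count $\dim(\ima Q)=\dim(\Ker L)$ gives $J$, automatically bicontinuous in finite dimensions. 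One phrasing deserves tightening: the quotient map $\pi:Y\to Y/\ima L$ is \emph{always} continuous (indeed $1$-Lipschitz) for the quotient seminorm, whether or not $\ima L$ is closed; what closedness buys is that the quotient seminorm is a genuine norm, i.e. the quotient is Hausdorff, and only then does the linear bijection $\pi|_{Y_1}$ between finite-dimensional spaces have a continuous inverse $\phi$. If $\ima L$ were a dense proper subspace (possible even with codimension one, e.g. the kernel of a discontinuous functional), the quotient seminorm would vanish identically and no continuous $\phi$ could exist. So you have located exactly where the closedness hypothesis enters, but it is the continuity of $\phi$, hence of $Q$, not of $\pi$, that hinges on it. Note finally that the proposition asserts only invertibility of $L_P$, not continuity of $K_P$, so your purely algebraic bijectivity argument is all that is required.
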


\begin{definition}
\label{def:L_compact}
Let $X$ and $Y$ be normed vector spaces and  
$L:\dom L\subset X\to Y$ a Fredholm mapping of index zero.
Let $P:X \to X$ and $Q: Y \to Y$ be two
continuous projectors  such that
$\ima P=\Ker L$ and $\ima L=\Ker Q=\ima (I-Q)$.
Let us consider $N:X\to Y$  a continuous operator and
$\Omega\subset X$ an open bounded set. 
Then, $N$ is called $L-$compact on $\overline{\Omega}$
if $QN(\overline{\Omega})$ is a  bounded set  and the operator
$K_{P}(I-Q)N$ is compact on $\overline{\Omega}$.
\end{definition}

\begin{theorem}
\label{teo:teocontinuacion}
Assume that $(X,\|.\|_X)$ and $(Y,\|.\|_Y)$
are two Banach spaces and $\Omega$ is an open bounded set.
Consider that $L:\dom L\subset X\to Y$ be a Fredholm mapping of 
index zero and $N:X\to Y$ be $L-$compact
on $\overline{\Omega}$. If the following hypotheses 
\begin{enumerate}
    \item[(C$_1$)] $ Lx\neq \lambda Nx $
    for each $(\lambda,x) \in (0,1)\times (\partial \Omega \cap \dom L)$.
    \item[(C$_2$)] $QNx\neq 0$
    for each $x \in \partial \Omega \cap \Ker L.$
    \item[(C$_3$)] $deg (JQN,\Omega \cap \Ker L,0)\neq 0$.
\end{enumerate}
are valid.
Then the operator equation $Lx= Nx$ has at least one
solution in $\dom L\cap \overline{\Omega}.$
\end{theorem}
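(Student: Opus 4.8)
The plan is to convert the coincidence equation $Lx=Nx$ into an equivalent fixed-point problem for a compact operator on $\overline{\Omega}$, and then to invoke the homotopy invariance and reduction properties of the Leray--Schauder degree, bringing in the hypotheses (C$_1$)--(C$_3$) exactly at the points where a degree must be shown to be nonzero. Throughout I would use the projectors $P,Q$, the right inverse $K_P$, and the isomorphism $J:\ima Q\to\Ker L$ supplied by Proposition~\ref{prop:projectorsPQ}.

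First, I would define the operator $\Phi(x)=Px+JQNx+K_P(I-Q)Nx$ and prove the equivalence: $x\in\dom L$ solves $Lx=Nx$ if and only if $\Phi(x)=x$. For the forward direction, applying $Q$ to $Lx=Nx$ and using $\ima L=\Ker Q$ gives $QNx=0$, while applying $I-Q$ and then $K_P$ gives $(I-P)x=K_P(I-Q)Nx$; summing these recovers $\Phi(x)=x$. For the converse, applying $P$ to $\Phi(x)=x$ extracts $JQNx=0$, hence $QNx=0$ by injectivity of $J$, and applying $L$ to the remaining identity yields $Lx=Nx$. A structural fact I would record here is that every term of $\Phi$ lands in $\dom L$, so any fixed point automatically lies in $\dom L$.

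Second, I would observe that $\Phi$ is compact on $\overline{\Omega}$: the projector $P$ has finite rank since $\dim\Ker L<\infty$ and is therefore compact; the term $JQN$ is compact because $QN(\overline{\Omega})$ is bounded in the finite-dimensional space $\ima Q$; and $K_P(I-Q)N$ is compact by the $L$-compactness of $N$. Consequently $I-\Phi$ is a compact perturbation of the identity, so the Leray--Schauder degree $\deg_{LS}(I-\Phi,\Omega,0)$ is defined as soon as $\Phi$ has no fixed point on $\partial\Omega$. If $\Phi$ does possess such a fixed point, it is already a solution of $Lx=Nx$ in $\overline{\Omega}$ and we are finished; so I may assume the contrary.

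Third, the heart of the argument: introduce the homotopy $\Phi_\lambda(x)=Px+JQNx+\lambda K_P(I-Q)Nx$, $\lambda\in[0,1]$, and show it is fixed-point-free on $\partial\Omega$. Applying $P$ to $\Phi_\lambda(x)=x$ forces $QNx=0$ for every $\lambda$, and then applying $L$ gives $Lx=\lambda Nx$ on $\partial\Omega\cap\dom L$. For $\lambda\in(0,1)$ this contradicts (C$_1$); for $\lambda=0$ the relation $(I-P)x=0$ forces $x=Px\in\Ker L$ with $QNx=0$, contradicting (C$_2$). Homotopy invariance then yields $\deg_{LS}(I-\Phi,\Omega,0)=\deg_{LS}(I-\Phi_0,\Omega,0)$. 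Since $I-\Phi_0=(I-P)-JQN$ reduces to the identity on $\Ker P$ and to $-JQN$ on $\Ker L$, the reduction property of the Leray--Schauder degree identifies this with the finite-dimensional Brouwer degree $\deg_{B}(\pm JQN,\Omega\cap\Ker L,0)$, which is nonzero by (C$_3$). Hence $\deg_{LS}(I-\Phi,\Omega,0)\neq0$, so $\Phi$ has a fixed point in $\Omega$, giving a solution of $Lx=Nx$ in $\dom L\cap\overline{\Omega}$. I expect the main obstacle to be the rigorous justification of this reduction step, where the infinite-dimensional Leray--Schauder degree must be matched with the Brouwer degree on $\Ker L$ (with the correct sign convention), together with the careful bookkeeping confirming that the resulting fixed point indeed lies in $\dom L$.
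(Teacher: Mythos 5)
Your proposal is correct, but there is no in-paper argument to measure it against: the paper states Theorem~\ref{teo:teocontinuacion} without proof, recalling it as the continuation theorem of coincidence degree theory and referring to \cite{mawhin_book} for details. What you have written is precisely the classical Gaines--Mawhin proof from that reference: the equivalence of $Lx=Nx$ with the fixed-point problem for $\Phi=P+JQN+K_P(I-Q)N$ (including the correct observation that fixed points automatically land in $\dom L$), compactness of $\Phi$ from the finite rank of $P$ and $JQ$ plus the $L$-compactness of $N$, the homotopy $\Phi_\lambda$ excluded on $\partial\Omega$ by (C$_1$) for $\lambda\in(0,1)$ and by (C$_2$) at $\lambda=0$, with the dichotomy that a fixed point of $\Phi_1$ on $\partial\Omega$ already solves the problem, and finally the reduction of $\deg(I-\Phi_0,\Omega,0)$ to the Brouwer degree $\deg(-JQN,\Omega\cap\Ker L,0)=(-1)^{\dim\Ker L}\deg(JQN,\Omega\cap\Ker L,0)$, nonzero by (C$_3$); you rightly note the sign is immaterial. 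The two ingredients you use beyond the bare statement --- continuity of $N$ and boundedness of $QN(\overline{\Omega})$ --- are exactly what the paper packages into Definition~\ref{def:L_compact}, so nothing is missing; the reduction step you flag as the main obstacle is the standard finite-dimensional reduction property of the Leray--Schauder degree and needs no new idea.
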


\begin{definition}
\label{def:degree}
Let $\Omega\subset\R^n$ be an open bounded set, 
$f\in C^1(\Omega,\R^n)\cap C(\overline{\Omega},\R^n)$ and 
$y\in \R^n\backslash f(\partial\Omega\cup N_f)$, i.e.
$y$ is a regular value of $f$. Here, $N_f=\{x\in\Omega:J_f(x)=0\}$
the critical set of $f$ and $J_f$ the Jacobian of $f$ at $x$.
Then, the degree $\mathrm{deg}\{f,\Omega,y\}$ is defined by
\begin{eqnarray*}
\mathrm{deg}\{f,\Omega,y\}=\sum_{x\in f^{-1}(y)}\mathrm{sgn} J_f(x),
\end{eqnarray*}
with the agreement that $\sum \phi =0$.

\end{definition}

\subsubsection{Reformulation of \eqref{eq:virus_model_equiv} as an operator equation.}
$ $

Let us consider that the  $X$ and $Y$ are two appropriate normed vector spaces 
and define the operators $L:\dom L\subset X \to Y$ and 
$N:X\rightarrow Y$
\begin{eqnarray}
 L\Big((x_{1},x_{2},x_{3})^{T}\Big)
&=&\left(\frac{dx_{1}}{dt},\frac{dx_{2}}{dt},\frac{dx_{3}}{dt}\right)^{T}
\label{eq:operator_L}
\\
N\Big((x_{1},x_{2},x_{3})^{T}\Big)
& = &
\Big(\mathcal{N}_1,\mathcal{N}_2,\mathcal{N}_3\Big)^{T}
\label{eq:operator_N}
\end{eqnarray}
where
\begin{eqnarray}
 \mathcal{N}_1(t)
 &=&b(t)\exp({-x_{1}(t)})- \mu_{1}(t)
 -\beta_{1}(t)\exp({x_{2}(t)})
 -\beta_{2}(t)\exp({x_{3}(t)})
 \nonumber\\
 &&
  +\gamma_{1}(t)\exp({x_{2}(t)-x_{1}(t)})
  +\gamma_{2}(t)\exp({x_{3}(t)-x_{1}(t)})
  \qquad{}\label{eq:operator_N:1}\\
  \mathcal{N}_2(t)
 &=&\beta_{1}(t)\exp({x_{1}(t)})+ \beta_{2}(t)\exp({x_{1}(t)+x_{3}(t)-x_{2}(t)})
  \nonumber\\
 &&
  +\alpha_{2}(t)\exp({x_{3}(t)-x_{2}(t)})-[\mu_{2}(t)+\alpha_{1}(t)+\gamma_{1}(t)] 
  \label{eq:operator_N:2}\\
  \mathcal{N}_3(t)
 &=&
  \alpha_{1}(t)\exp({x_{2}(t)-x_{3}(t)})-[\mu_{3}(t)+\alpha_{2}(t)+\gamma_{2}(t)]. 
  \label{eq:operator_N:3}
\end{eqnarray}
Then, we can rewrite the system \eqref{eq:virus_model_equiv}
as the following operator equation 
\begin{eqnarray}
L\Big((S^{\ast},L^{\ast},A^{\ast})^T\Big)
=
N\Big((S^{\ast},L^{\ast},A^{\ast})^T\Big), 
\quad (S^{\ast},L^{\ast},A^{\ast})\in \dom L\subset X,
\label{eq:operator_equation}
\end{eqnarray}
where the Banach spaces $X$ and $Y$ coincides and are given by
\begin{eqnarray}
&&X=Y=\Big\{(x_{1},x_{2},x_{3})^{T}\in
C(\mathbb{R},\mathbb{R}^{3})\quad:\quad x_{i}(t+\omega)=x_{i}(t),
\;i\in\{1,2,3\},\;
\nonumber\\
&&
\hspace{2.5cm}
\Big\|(x_{1},x_{2},x_{3})^{T}\Big\|
=\sum_{i=1}^3\max_{t \in
[0,\omega]}|x_{i}(t)|<\infty
\Big\}.
\hspace{1cm}
\label{eq:X:space}
\end{eqnarray}
The spaces in \eqref{eq:X:space} are the more appropriate,
since  we are interested in the study of $\omega$-periodic solutions.
Thus, the proof of existence of positive periodic
solutions for \ref{eq:virus_model_equiv}
is reduced the  application of 
Theorem~\ref{teo:teocontinuacion} to equation
\eqref{eq:operator_equation}, i.e. is reduced
to prove that the operators 
$L$ and $N$ defined on \eqref{eq:operator_L} and \eqref{eq:operator_N} 
satisfy all the hypotheses of Theorem~\ref{teo:teocontinuacion}.

\subsubsection{$L$ defined on \eqref{eq:operator_L} is a Fredholm operator of index zero}
$ $

\begin{lemma}
\label{lem:L_is_fredholm}
Let us consider $X$ and $Y$ the Banach spaces given on \eqref{eq:X:space}.
Then $L:\dom L\subset X\to Y$  defined on \eqref{eq:operator_L}
is a Fredholm operator of index zero.
\end{lemma}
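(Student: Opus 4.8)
The plan is to verify the two conditions in Definition~\ref{def:L_is_fredholm} directly, since $L$ is simply the derivative operator acting on the space of continuous $\omega$-periodic functions. First I would identify the kernel and the image explicitly. Because $L(x_1,x_2,x_3)^T = (x_1',x_2',x_3')^T$, a triple lies in $\Ker L$ exactly when each component has zero derivative, i.e. is constant; since constants are trivially $\omega$-periodic, $\Ker L = \{(c_1,c_2,c_3)^T : c_i \in \R\}\cong\R^3$, so $\mathrm{dim}(\Ker L)=3$.

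Next I would characterize $\ima L$. For $(y_1,y_2,y_3)^T\in Y$, solving $x_i'=y_i$ with $x_i$ being $\omega$-periodic forces the compatibility condition $\int_0^\omega y_i(t)\,dt = 0$ for each $i$, because $x_i(\omega)-x_i(0)=\int_0^\omega y_i$ must vanish by periodicity. Conversely, whenever all three mean values vanish, the antiderivatives $x_i(t)=\int_0^t y_i(s)\,ds$ are $\omega$-periodic and continuously differentiable, giving a preimage. Hence
\begin{eqnarray*}
\ima L=\Big\{(y_1,y_2,y_3)^T\in Y\;:\;\int_0^\omega y_i(t)\,dt=0,\;i\in\{1,2,3\}\Big\}.
\end{eqnarray*}
This set is the intersection of the kernels of the three continuous linear functionals $y\mapsto\int_0^\omega y_i$, so $\ima L$ is closed, which handles the closedness requirement in \eqref{def:L_is_fredholm:1}.

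To finish I would compute the codimension. The map $Y\to\R^3$ sending $(y_1,y_2,y_3)^T$ to $\big(\tfrac1\omega\int_0^\omega y_1,\tfrac1\omega\int_0^\omega y_2,\tfrac1\omega\int_0^\omega y_3\big)$ is a surjective continuous linear map with kernel exactly $\ima L$, so $Y/\ima L\cong\R^3$ and $\mathrm{codim}(\ima L)=3$. Therefore $\mathrm{dim}(\Ker L)=\mathrm{codim}(\ima L)=3<\infty$ and $\ima L$ is closed, establishing that $L$ is Fredholm of index zero. I would likely exhibit the natural projectors $P(x_1,x_2,x_3)^T=\big(\tfrac1\omega\int_0^\omega x_1,\tfrac1\omega\int_0^\omega x_2,\tfrac1\omega\int_0^\omega x_3\big)$ onto $\Ker L$ and $Q=P$ (identifying $X=Y$) onto the complement of $\ima L$, to make the identifications $\ima P=\Ker L$ and $\ima L=\Ker Q$ transparent for the later lemmas.

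None of this is genuinely hard; the only point requiring care is the image characterization, where one must argue both that periodicity \emph{forces} the zero-mean condition and that zero-mean \emph{suffices} to produce a periodic antiderivative in the correct space. I expect that bookkeeping — rather than any deep idea — to be the main thing to get right, since the rest is the standard observation that differentiation on periodic functions is Fredholm of index zero.
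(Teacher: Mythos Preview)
Your proposal is correct and follows essentially the same approach as the paper: both compute $\Ker L\cong\R^3$ from the vanishing derivative, characterize $\ima L$ via the zero-mean condition on each component, and use the continuous averaging map into $\R^3$ to obtain closedness and $\mathrm{codim}(\ima L)=3$. If anything, your argument is slightly cleaner, since you argue both inclusions for $\ima L$ explicitly and read off the codimension directly from the surjection $Y\to\R^3$, whereas the paper invokes abstract isomorphisms $X\cong\ima L\oplus(X/\ima L)$ and $\ima L\cong X/\Ker L$ to reach the same conclusion.
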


\begin{proof}
 
To proof the Lemma we need to verify that the linear operator $L$ 
defined on \eqref{eq:operator_L} satisfies 
the Definition~\ref{def:L_is_fredholm}. Indeed,
we calculate $\Ker L$ and $\ima L$. First,
if we consider that $(S^{\ast},L^{\ast},A^{\ast})^{T}\in \Ker L $,
then we have that 
$(S^{\ast},L^{\ast},A^{\ast})(t)=(s_0,l_0,a_0) \in \mathbb{R}^3$
for all $t\ge t_0$ such that $(S^{\ast},L^{\ast},A^{\ast})(t_0)=(s_0,l_0,a_0)$,
which naturally implies that
\begin{eqnarray}
\Ker L \cong\mathbb{R}^{3}.
 \label{eq:kenelL}
\end{eqnarray}
Second, if we choose  
$(S^{\ast},L^{\ast},A^{\ast})^{T}\in \ima L $,
we have that there is $(S,L,A)\in \dom L $
such that $L\Big((S,L,A)^{T}\Big)=(S^{\ast},L^{\ast},A^{\ast})^{T}$.
Then, by the definition of $L$ and the $w$-periodic behavior
of the functions $\{S,L,A\}$, we deduce
that
\begin{eqnarray*}
\int_{t}^{t+\omega}(S^{\ast}(\tau),L^{\ast}(\tau),A^{\ast}(\tau))^{T}d\tau=0,
\mbox{ for each $t\ge t_0$}
\end{eqnarray*}
or equivalently, by the $\omega$-periodicity of $(S^{\ast},L^{\ast},A^{\ast}),$ 
we have that
\begin{eqnarray}
\ima L=\left\{(S^{\ast},L^{\ast},A^{\ast})\in Y\quad:\quad
\int_{0}^{\omega}(S^{\ast}(\tau),L^{\ast}(\tau),A^{\ast}(\tau))^{T}d\tau=0\right\}.
\label{eq:imagen_L}
\end{eqnarray}
Now, by the elementary results of linear algebra we have the following
three relations $X\cong \ima L\oplus (X/\ima L),$ $X\cong \Ker L\oplus (X/\Ker L),$
and $\ima L\cong X/\Ker L$. Then, $\Ker L\cong X/\ima L$
and we deduce that ${\rm dim}(\Ker L)= {\rm codim}(\ima L)=3$.
Thus the first condition in \eqref{def:L_is_fredholm:1} 
is satisfied. Moreover,
if we consider the linear continuous
mapping $F:\ima L\subset Y \to \mathbb{R}^{3}$
defined as follows
\begin{eqnarray*}
 F\Big((x_{1},x_{2},x_{3})^{T}\Big)
 =
\left (
\int_{0}^{\omega}x_{1}(\tau)d\tau,
\int_{0}^{\omega}x_{2}(\tau)d\tau,
\int_{0}^{\omega}x_{3}(\tau)d\tau
\right),
\end{eqnarray*}
we note that 
$F^{-1}\Big((0,0,0)^{T}\Big)= \ima L$, then  $\ima L$ is a closed
set of $Y$ and second condition in  \eqref{def:L_is_fredholm:1} is 
also verified. 
\end{proof}

\subsubsection{Construction of the projectors $P,Q$ and
the operator $K_{P}$}
$ $


In this subsection we construct the
projectors $P,Q$ and $K_{P}$  asociated to $L$, the 
Fredholm operator of index zero defined on \eqref{eq:operator_L},
and satisfying the Proposition~\ref{prop:projectorsPQ}.
Indeed, if we consider that $P$
and $Q$ are defined as follows
\begin{eqnarray}
P\left((x_{1},x_{2},x_{3})^{T}\right)
=
Q\left((x_{1},x_{2},x_{3})^{T}\right)
=
\frac{1}{\omega}
 \int_{0}^{\omega}
 (x_{1}(\tau),x_{2}(\tau),x_{3}(\tau))^{T} d\tau
  \label{eq:operators_P_Q}
\end{eqnarray}
for any $(x_{1},x_{2},x_{3})\in X$.
Then, we can note that
\begin{enumerate}[(a)]
\item \underline{$\Ker L=\ima P$}. 
We prove this fact by double inclusion argument.
First, if $(S^{\ast},L^{\ast},A^{\ast})^{T} \in \Ker L$,
then by \eqref{eq:kenelL}, we have that 
$(S^{\ast},L^{\ast},A^{\ast})(t)=(s_0,l_0,a_0)\in\mathbb{R}^3$
for all $t\ge t_0,$
which implies that $(S^{\ast},L^{\ast},A^{\ast})\in\ima P$,
since for each $(s_0,l_0,a_0)\in\mathbb{R}^3$ there is
$(s_0,l_0,a_0)\in X$ such that $ P\Big(s_0,l_0,a_0\Big)=(s_0,l_0,a_0),$
i.e. $\Ker L \subset \ima P$. Conversely, if 
$(S^{\ast},L^{\ast},A^{\ast})^{T} \in \ima P$ we have that there is
$(z_{1},z_{2},z_{3}) \in X$ such that 
$P((z_{1},z_{2},z_{3})^{T} )=(S^{\ast},L^{\ast},A^{\ast})^{T}$.
Then by \eqref{eq:operators_P_Q} we follow that
$\omega^{-1}\int_{0}^{\omega}
(z_{1}(\tau),z_{2}(\tau),z_{3}(\tau))^{T} d\tau=(S^{\ast},L^{\ast},A^{\ast})$
and by differentiation we deduce that $L(S^{\ast},L^{\ast},A^{\ast})=(0,0,0)$,
i.e. $\ima P\subset\Ker L.$

\item  \underline{$\Ker Q=\ima L $}.
We follow the proof of this equality by application of \eqref{eq:imagen_L}
and \eqref{eq:operators_P_Q}. Indeed, by \eqref{eq:operators_P_Q}
we have that 
$(S^{\ast},L^{\ast},A^{\ast})^{T} \in \Ker Q$ is equivalent to 
$\int_{0}^{\omega}(S^{\ast},L^{\ast},A^{\ast})^{T}(\tau)d\tau =(0,0,0)$
and by \eqref{eq:imagen_L} this equality means that 
$(S^{\ast},L^{\ast},A^{\ast})^{T} \in \ima L.$

\item  \underline{$\ima (I-Q)=\ima L $}.
The proof follows by a double inclusion argument. If   
$(S^{\ast},L^{\ast},A^{\ast})^{T} \in\ima (I-Q) $, then 
there is $(z_1,z_2,z_3)\in X$ such that 
$(I-Q)\Big((z_1,z_2,z_3)^{T}\Big)=(S^{\ast},L^{\ast},A^{\ast})^{T}$.
Then, by integration and the definition of $Q$,  given on \eqref{eq:operators_P_Q},  
we have that
\begin{eqnarray*}
&&\int_0^{\omega}(S^{\ast},L^{\ast},A^{\ast})^{T}(\tau)d\tau
\\
&&
\hspace{1.5cm}
=\int_0^{\omega}\left(
 (z_1,z_2,z_3)^{T}(\tau)
 -\frac{1}{\omega}\int_0^{\omega}(z_1,z_2,z_3)^{T}(m)dm
 \right)d\tau
=(0,0,0).
\end{eqnarray*}
Thus, by \eqref{eq:imagen_L} we deduce that $(S^{\ast},L^{\ast},A^{\ast})^{T}\in \ima L,$
i.e. $\ima (I-Q)\subset\ima L.$ The proof of $\ima L\subset\ima (I-Q)$
is analogous.

\item  \underline{Operators $K_{P}$ and $L_{P}.$}
The operator $L_{P}:\dom L\cap\Ker P\to \ima L$
is the restriction of $L$ to $\dom L\cap\Ker P,$ i.e.
$L_{P}=L$ on $\dom L\cap\Ker P$.
The operator $K_{P}$, the inverse of $L_{P}$, 
is defined as follows
\begin{eqnarray}
&& K_{P}\Big((x_{1},x_{2},x_{3})^{T}\Big)(t)
\nonumber\\
&&
\hspace{1.0cm}
=\int_0^t(x_{1},x_{2},x_{3})^{T}(\tau)d\tau
 -\frac{1}{\omega}\int_0^\omega\int_0^{\eta}(x_{1},x_{2},x_{3})^{T}(m)dmd\eta.
 \label{eq:operator_kp}
\end{eqnarray}
We can prove that $K_{P}$ is the inverse of $L_{P}$ 
by application of the following fundamental  relation
\begin{eqnarray*}
&&\int_{0}^{t}\frac{d}{ds}(x_1,x_2,x_3)(s)ds
 -\frac{1}{\omega}
 \int_{0}^{\omega}\int_{0}^{t}
 \frac{d}{dm}(x_1,x_2,x_3)(m)dm ds=(x_1,x_2,x_3)(t),
\end{eqnarray*}
which is valid only for all $(x_1,x_2,x_3)^T\in \dom L\cap\Ker P.$

\end{enumerate}
Thus, form (a), (b) and (c) we can note that  the projectors
$P$ and $Q$ defined on \eqref{eq:operators_P_Q} satisfy the
condition $(i)$ of the Proposition~\ref{prop:projectorsPQ} and from  (d)
the condition $(ii)$ of the Proposition~\ref{prop:projectorsPQ}
is also satisfied.

\subsubsection{$N$ defined on \eqref{eq:operator_N} is a continuous operator}
$ $

\begin{lemma}
\label{lem:N_is_contin}
Let us consider $X$ and $Y$ the Banach spaces given on \eqref{eq:X:space}.
Then $N:X\to Y$  defined on \eqref{eq:operator_L}
is a continuous operator.
\end{lemma}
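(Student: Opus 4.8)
The plan is to establish two things: that $N$ maps $X$ into $Y$, and that it is continuous for the sup-norm. For the mapping property I would note that each component $\mathcal{N}_i$ in \eqref{eq:operator_N:1}--\eqref{eq:operator_N:3} is a finite sum of products of the coefficient functions---continuous and $\omega$-periodic by \eqref{eq:hipot_H}---with terms of the form $\exp(\ell(t))$, where $\ell$ is a linear combination of the components $x_1,x_2,x_3$. Since $\exp$ composed with a continuous $\omega$-periodic function is continuous and $\omega$-periodic, and since finite sums and products preserve both properties, each $\mathcal{N}_i$ lies in $C(\mathbb{R},\mathbb{R})$ and is $\omega$-periodic; hence $N(x)\in Y$.

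For continuity I would argue sequentially. Let $x^{(n)}\to x$ in $X$. A convergent sequence is bounded, so there is $M>0$ with $\|x^{(n)}\|\le M$ and $\|x\|\le M$ for all $n$; in particular each component satisfies $|x_i^{(n)}(t)|\le M$ and $|x_i(t)|\le M$ for all $t$. The key observation is that every argument appearing inside an exponential in \eqref{eq:operator_N:1}--\eqref{eq:operator_N:3} is a linear combination of at most three components, so all such arguments take values in the fixed compact interval $[-R,R]$ with $R=3M$. On $[-R,R]$ the function $\exp$ is Lipschitz with constant $e^{R}$, and this local Lipschitz bound is exactly what replaces the (false) global one.

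With this in hand the estimate is routine. For a representative term such as $\gamma_1(t)\exp(x_2(t)-x_1(t))$ I would bound, pointwise in $t$,
\begin{eqnarray*}
\Big|\gamma_1(t)\Big(\exp\big(x_2^{(n)}-x_1^{(n)}\big)-\exp\big(x_2-x_1\big)\Big)\Big|
\le
\Big(\max_{[0,\omega]}\gamma_1\Big)\,e^{R}\Big(|x_2^{(n)}-x_2|+|x_1^{(n)}-x_1|\Big),
\end{eqnarray*}
using the local Lipschitz bound together with the finiteness of $\max_{[0,\omega]}\gamma_1$. Taking the supremum over $t\in[0,\omega]$ and summing the finitely many terms of this type yields a constant $C>0$, depending only on $M$ and the uniform bounds of the coefficients, such that $\|N(x^{(n)})-N(x)\|\le C\,\|x^{(n)}-x\|$. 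Letting $n\to\infty$ gives $N(x^{(n)})\to N(x)$ in $Y$, which proves continuity.

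The main obstacle is precisely that $\exp$ is not globally Lipschitz---indeed not uniformly continuous---on $\mathbb{R}$, so a direct estimate would fail; the argument hinges on first extracting the uniform bound $M$ from the convergent sequence and then working with the local Lipschitz constant $e^{R}$ on the compact range of the admissible arguments. Everything else---periodicity, multiplication by bounded coefficients, and summing finitely many uniformly convergent terms---is routine bookkeeping.
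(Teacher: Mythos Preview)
Your argument is correct and follows essentially the same route as the paper: sequential continuity combined with a Lipschitz-type estimate for the exponential terms in \eqref{eq:operator_N:1}--\eqref{eq:operator_N:3}, yielding $\|N(x^{(n)})-N(x)\|\le C\|x^{(n)}-x\|$. Your treatment is in fact tighter than the paper's on one point: the paper asserts inequalities such as $|e^{a}-e^{b}|\le e^{b}|a-b|$ and obtains a constant $C$ depending only on the coefficient functions, whereas you correctly first extract a uniform bound $M$ from the convergent sequence and use the local Lipschitz constant $e^{R}$ on $[-R,R]$, so that $C$ depends on $M$ as well---this is the right dependence, since $\exp$ is not globally Lipschitz.
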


\begin{proof}
Let us consider  that $\{(S^*_n,L^*_n,A^*_n)\}\subset X$ an arbitrary sequence
such that converges to $\{(S^*,L^*,A^*)\}$ in the norm induced topology of $X$.
Now, using that the inequality $|1-e^{-x}|\le x$ holds for all $x\in\R,$
we can prove that $|e^{-a}-e^{-b}|\le e^{-a}|a-b|$ and 
$|e^a-e^b|\le e^b|a-b|$. Then, by
by the definition of $N$ given on \eqref{eq:operator_N}
and by \eqref{eq:operator_N}-\eqref{eq:operator_N:3},
we deduce that there is $C>0$ depending on 
$b,\mu_1,\beta_1,\beta_2,\gamma_1,\gamma_2,\alpha_1$ and $\alpha_2$
such that
\begin{eqnarray*}
 \|N(S^*_n,L^*_n,A^*_n)-N(S^*,L^*,A^*)\|
 \le C \|(S^*_n,L^*_n,A^*_n)-(S^*,L^*,A^*)\|.
\end{eqnarray*}
Thus, the sequence $\{N(S^*_n,L^*_n,A^*_n)\}\subset X$ 
converges to $\{N(S^*,L^*,A^*)\}$
in the norm induced topology of $X$. Therefore, the operator
$N$ is continuous.
\end{proof}

\subsubsection{$N$ defined on \eqref{eq:operator_N} is $L$-compact on
any ball of $X$ centered at $(0,0,0)$}
$ $ 

\begin{lemma}
\label{lem:N_is_L_compact}
Let $h\in \mathbb{R}^+$ (a fix number) and consider that 
$\Omega\subset X$ is the open ball of radius $h$ and centered at $(0,0,0)$, i.e.
\begin{eqnarray}
\Omega =\{(x_1,x_2,x_3)\in X\;/\;\|(x_1,x_2,x_3)\|<h\}.
\label{eq:omega_set}
\end{eqnarray}
Moreover, consider that $L:\dom L\subset X\to Y$ and $N:X\to Y$ 
are the operators defined on \eqref{eq:operator_L} and \eqref{eq:operator_N},
respectively. If the hypothesis \eqref{eq:hipot_H} is satisfied,
then $N$ is $L$-compact on $\overline{\Omega}$.
\end{lemma}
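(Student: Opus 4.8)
The plan is to verify directly the two conditions in Definition~\ref{def:L_compact}, namely that $QN(\overline{\Omega})$ is bounded in $Y$ and that $K_{P}(I-Q)N$ is compact on $\overline{\Omega}$. Everything hinges on a single preliminary estimate: that $N$ carries the bounded set $\overline{\Omega}$ into a bounded subset of $Y$. Since $\overline{\Omega}$ consists of those $(x_{1},x_{2},x_{3})^{T}$ with $|x_{i}(t)|\le h$ for all $t$ and $i$, every exponent occurring in $\mathcal{N}_{1},\mathcal{N}_{2},\mathcal{N}_{3}$ (such as $x_{2}-x_{1}$ or $x_{1}+x_{3}-x_{2}$) is bounded in modulus by $3h$, so each exponential term is dominated by $e^{3h}$. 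As the coefficients $b,\mu_{1},\beta_{1},\beta_{2},\gamma_{1},\gamma_{2},\alpha_{1},\alpha_{2}$ are continuous and $\omega$-periodic they are bounded on $[0,\omega]$; hence there is a constant $M=M(h)>0$ with $\|N(x_{1},x_{2},x_{3})\|\le M$ for every $(x_{1},x_{2},x_{3})\in\overline{\Omega}$. Because $Q$ in \eqref{eq:operators_P_Q} is an averaging operator of norm at most one, this bound immediately yields the boundedness of $QN(\overline{\Omega})$, settling the first requirement, and likewise $\|(I-Q)N(x)\|\le 2M$ on $\overline{\Omega}$.

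For the second requirement I would show that $K_{P}(I-Q)N(\overline{\Omega})$ is relatively compact in $X$ via the Arzel\`a--Ascoli theorem; combined with the continuity of $N$ from Lemma~\ref{lem:N_is_contin} and of the bounded linear operators $Q$, $I-Q$, $K_{P}$, this gives the compactness of the composite operator. Writing $y=(I-Q)N(x)$ with $\|y\|\le 2M$, the explicit formula \eqref{eq:operator_kp} gives the uniform bound
\[
|K_{P}y(t)|\le \int_{0}^{\omega}|y(\tau)|\,d\tau+\frac{1}{\omega}\int_{0}^{\omega}\int_{0}^{\eta}|y(m)|\,dm\,d\eta\le 3\omega M,
\]
independent of $t$ and of $x$, proving uniform boundedness of the image. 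For equicontinuity, observe that for $t_{1}<t_{2}$ the constant second term in \eqref{eq:operator_kp} cancels, so that $|K_{P}y(t_{2})-K_{P}y(t_{1})|=|\int_{t_{1}}^{t_{2}}y(\tau)\,d\tau|\le 2M\,|t_{2}-t_{1}|$, a Lipschitz estimate with a constant that does not depend on $x\in\overline{\Omega}$. These two facts are precisely the hypotheses of Arzel\`a--Ascoli.

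One point to check carefully, rather than a genuine obstacle, is that the image actually lies in $X$: since $(I-Q)N(x)\in\ima(I-Q)=\ima L$, its integral over one period vanishes, and the construction in item (d) preceding \eqref{eq:operator_kp} then shows that $K_{P}y$ is again continuous and $\omega$-periodic, so it belongs to $X$. I expect no serious difficulty here; the verification is routine once the uniform bound $M(h)$ is in hand, and it is worth remarking that only the continuity and $\omega$-periodicity of the coefficients are used, so the additional inequality in \eqref{eq:hipot_H} plays no role in this particular lemma (it will instead be needed for the a priori bounds of Theorem~\ref{teo:contradiction}).
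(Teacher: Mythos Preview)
Your proof is correct and follows essentially the same route as the paper's: obtain a uniform bound for $N$ on $\overline{\Omega}$ from $|x_i|\le h$ and the boundedness of the periodic coefficients, deduce that $QN(\overline{\Omega})$ is bounded, and then verify the Arzel\`a--Ascoli hypotheses for $K_{P}(I-Q)N(\overline{\Omega})$ via a uniform bound and a Lipschitz estimate in $t$. Your remark that only the continuity and $\omega$-periodicity of the coefficients are needed here---not the inequality in \eqref{eq:hipot_H}---is correct and in fact sharpens the paper's own argument, which invokes all of \eqref{eq:hipot_H} without separating out which part is actually used.
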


\begin{proof}
We prove that $N,L$ and $\Omega$ satisfy the Definition~\ref{def:L_compact}.
Indeed, firstly we note that $\Omega $ is naturally an open bounded set by construction
and $L$ is a Fredholm operator of index zero by Lemma~\ref{lem:L_is_fredholm}.
Second, from \eqref{eq:operator_N} and \eqref{eq:operators_P_Q} we have that
$QN$ is defined by 
\begin{eqnarray}
QN((x_1,x_2,x_3)^T)=\frac{1}{\omega}\int_0^{\omega}
(\mathcal{N}_1(\tau),\mathcal{N}_2(\tau),\mathcal{N}_3(\tau))
d\tau
\label{eq:qn_operator}
\end{eqnarray}
where $\mathcal{N}_i,$ $i=1,2,3$, denote the functions
defined on \eqref{eq:operator_N:1}-\eqref{eq:operator_N:3}.
Moreover, for $(x_1,x_2,x_3)\in \overline{\Omega}$
and the hypothesis~\eqref{eq:hipot_H} we can prove
that the functions $\mathcal{N}_i$ are bounded.
Then, for $(x_1,x_2,x_3)\in \overline{\Omega}$ we have that
\begin{eqnarray*}
\|QN((x_1,x_2,x_3)^T)\|\le \frac{1}{\omega}\int_0^{\omega}
\|(\mathcal{N}_1,\mathcal{N}_2,\mathcal{N}_3)\|d\tau
=\|(\mathcal{N}_1,\mathcal{N}_2,\mathcal{N}_3)\|
\end{eqnarray*}
which implies that $QN(\overline{\Omega})$ is bounded.
Finally, from  \eqref{eq:operator_L}, \eqref{eq:operator_N} 
and \eqref{eq:operator_kp}, we have that $K_P(I-Q)N$ is defined
as follows
\begin{eqnarray*}
 &&(K_P(I-Q)N)((x_1,x_2,x_3)^T)(t)
 \\
 &&
 \qquad=
 \int_0^t(\mathcal{N}_1,\mathcal{N}_2,\mathcal{N}_3)(\tau)d\tau
 +\left(\frac{\omega^2}{2}-\frac{t}{\omega}\right)\int_0^{\omega}
	(\mathcal{N}_1,\mathcal{N}_2,\mathcal{N}_3)(\tau)d\tau
\\
&&
\qquad\quad
 -\frac{1}{\omega}\int_0^{\omega}\int_0^{\eta}
	(\mathcal{N}_1,\mathcal{N}_2,\mathcal{N}_3)(m)dmd\eta.
\end{eqnarray*}
Then, we deduce that
\begin{eqnarray*}
\|K_P(I-Q)N\|\le \left(\frac{\omega^2}{2}+3\omega\right)\|N\|,
\end{eqnarray*}
which implies that $(K_P(I-Q)N)(\overline{\Omega})$ is bounded,
since $N$ is bounded on $\overline{\Omega}$. Moreover,
for any $t,s\in [t_0,\infty[$,
we can deduce the following bound
\begin{eqnarray*}
|(K_P(I-Q)N)((x_1,x_2,x_3)^T)(t)-(K_P(I-Q)N)((x_1,x_2,x_3)^T)(s)|
\le 2\|N\||t-s|,
\end{eqnarray*}
i.e. the operator $K_P(I-Q)N$ is equicontinuous. Thus, by
application of Arzela Ascoli's theorem we have that 
$K_P(I-Q)N$ is a compact operator on $\overline{\Omega}$.
Therefore, the operator $N$  is  $L$-compact
on $\overline{\Omega}$.
\end{proof}

\subsubsection{The condition (C$_1$)-(C$_3$) of Theorem~\ref{teo:teocontinuacion}
are satisfied}
$ $ 

\begin{theorem}
\label{teo:contradiction}
Let us consider $X$ and $Y$ are the Banach spaces defined on \eqref{eq:X:space},
$Q$ defined by \eqref{eq:operators_P_Q}, and
$L:X\to Y$ and $N:X\to Y$ defined on \eqref{eq:operator_L} and \eqref{eq:operator_N},
respectively. Moreover, assume that the hypothesis \eqref{eq:hipot_H} is
satisfied. Then, there are the positive constants $\rho_1,\rho_2,\rho_3,d_1,d_2,d_3,
 \delta_1,\delta_2$ and $\delta_3$,
 such that the following assertions are valid
\begin{itemize}
 \item[(a)] If $\lambda\in (0,1)$ and $(x_1,x_2,x_3)\in \dom L $
 are such that $L(x_1,x_2,x_3)=\lambda N(x_1,x_2,x_3)$,
 the following inequalities  
\begin{eqnarray}
&&x_i(t)<\ln(\rho_i/w)+di,\quad i=1,2,3,
\label{eq:upbound}
\\
&&\ln(\delta_i)<x_i(t),\quad i=1,2,3, 
\label{eq:lowbound}  
\end{eqnarray}
holds for all $t\in [0,\omega].$
 
 \item[(b)] If $(x_1,x_2,x_3)\in \Ker L $
 are such that $QN(x_1,x_2,x_3)=0$,  the following inequalities  
\begin{eqnarray}
&&x_i(t)<\ln(\rho_i/w),\quad i=1,2,3,
\label{eq:upbound_ker}
\\
&&\ln(\delta_i)<x_i(t),\quad i=1,2,3, 
\label{eq:lowbound_ker} 
\end{eqnarray}
holds for all $t\in [0,\omega].$
 
\end{itemize}
 
\end{theorem}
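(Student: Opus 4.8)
The plan is to turn the operator equation $L(x_1,x_2,x_3)=\lambda N(x_1,x_2,x_3)$ into the three scalar ODEs $x_i'=\lambda\,\mathcal N_i$, $i=1,2,3$, with $\mathcal N_i$ as in \eqref{eq:operator_N:1}--\eqref{eq:operator_N:3}, and then to exploit the $\omega$-periodicity of the $x_i$ through two elementary devices. First, integrating each equation over $[0,\omega]$ and using $\int_0^\omega x_i'\,dt=0$ yields the three balance identities $\int_0^\omega \mathcal N_i(t)\,dt=0$. Second, at a point $t_i^{+}$ where $x_i$ attains its maximum (and $t_i^{-}$ where it attains its minimum) one has $x_i'=0$, hence $\mathcal N_i(t_i^{\pm})=0$; these pointwise identities at the extrema are the sharpest information available and will produce the ``good points''. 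I would combine them with the oscillation estimate, valid for any reference point $\xi_i$, namely $x_i(t)=x_i(\xi_i)+\int_{\xi_i}^{t}x_i'\,ds\le x_i(\xi_i)+\int_0^\omega|x_i'|\,ds=x_i(\xi_i)+\lambda\int_0^\omega|\mathcal N_i|\,ds\le x_i(\xi_i)+\int_0^\omega|\mathcal N_i|\,ds$, where the last integral is controlled by the balance identity: writing $\mathcal N_i$ as a difference of a nonnegative exponential part and a constant part with equal integrals, one gets $\int_0^\omega|\mathcal N_i|\,dt\le 2\int_0^\omega(\text{constant part})$. For $i=2,3$ this bound is a pure integral of the coefficients, which fixes the oscillation constants $d_2,d_3$ outright.

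Next I would extract the point values. For the upper bounds, the identity $\mathcal N_2(t_2^{+})=0$ forces $\beta_1 e^{x_1}\le(\mu_2+\alpha_1+\gamma_1)$ at $t_2^{+}$ (all summands in \eqref{eq:operator_N:2} being positive), which bounds $x_1$ from above at a point; spreading this by the $x_1$-oscillation---itself finite once $x_2,x_3$ are bounded above through $\int_0^\omega|\mathcal N_1|\le2\int_0^\omega(\mu_1+\beta_1 e^{x_2}+\beta_2 e^{x_3})\,dt$---gives \eqref{eq:upbound} for $i=1$. For the lower bounds, the identity $\mathcal N_1(t_1^{-})=0$ gives $b\,e^{-x_1}\le\mu_1+\beta_1 e^{x_2}+\beta_2 e^{x_3}$ at $t_1^{-}$, so once $x_2,x_3$ are bounded above, $-x_1$ is bounded above at a point and \eqref{eq:lowbound} for $i=1$ follows after adding the oscillation. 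Thus the susceptible variable $x_1$ is entirely slaved to $x_2$ and $x_3$, and the whole difficulty is concentrated in bounding the latter two.

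The hard part is that $x_2$ and $x_3$ are genuinely coupled: equation \eqref{eq:operator_N:2} controls the combinations $e^{x_3-x_2}$ and $e^{x_1+x_3-x_2}$, while \eqref{eq:operator_N:3} controls $e^{x_2-x_3}$, so the two equations constrain $x_2-x_3$ from both sides but leave the common level undetermined until the equations are used jointly. I would evaluate $\mathcal N_3=0$ at the maximum of $x_3$ to write $\max x_3\le x_2(t_3^{+})+\ln(\alpha_1/(\mu_3+\alpha_2+\gamma_2))$, and $\mathcal N_3=0$ at the minimum of $x_3$ to get the matching lower relation, thereby reducing everything to a priori bounds for $x_2$; the corresponding bounds for $x_2$ are then obtained from \eqref{eq:operator_N:2} by isolating the $e^{-x_2}$ terms. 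The role of hypothesis \eqref{eq:hipot_H}, namely $\max\frac{\alpha_1(\alpha_2+\gamma_2)}{\alpha_1+\mu_2}\le\min(\mu_3+\alpha_2+\gamma_2)$, is precisely to guarantee that this coupled pair of inequalities closes up with finite constants rather than forcing $x_2-x_3$ (equivalently the infectious-to-latent ratio) to run off to infinity; it is the periodic, coefficient-wise analogue of the threshold separating the infected and uninfected regimes in the autonomous model. I expect verifying that this closure is consistent---that the constants $\rho_i,d_i,\delta_i$ produced by the chain $\{x_2,x_3\}\to x_1$ are independent of $\lambda$ and of the particular solution---to be the main obstacle, and \eqref{eq:hipot_H} to be exactly the inequality that makes it work.

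Finally, assertion (b) is the specialization of (a) to $\Ker L$. By \eqref{eq:kenelL} every element of $\Ker L$ is a constant vector $(x_1,x_2,x_3)\equiv(s_0,l_0,a_0)$, and by \eqref{eq:operators_P_Q} the equation $QN(x_1,x_2,x_3)=0$ is literally the system of balance identities $\int_0^\omega\mathcal N_i\,dt=0$ evaluated on constants. Hence the same point estimates apply, but now there is no oscillation to add, so the additive constants $d_i$ disappear and one obtains the sharper bounds \eqref{eq:upbound_ker} and \eqref{eq:lowbound_ker}. This completes the estimates and, together with the $L$-compactness already established, supplies the information needed to verify hypotheses (C$_1$) and (C$_2$) of Theorem~\ref{teo:teocontinuacion}.
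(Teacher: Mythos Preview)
Your overall architecture matches the paper: write $x_i'=\lambda\mathcal N_i$, use $\int_0^\omega x_i'=0$ for balance identities, bound $\int_0^\omega|x_i'|$ by twice the ``constant part'' to get the oscillation constants $d_2,d_3$, evaluate $\mathcal N_i=0$ at extremal points for the lower bounds, and treat (b) as the constant-vector specialization of (a). All of that is exactly what the paper does.

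The gap is in the upper bound for $x_2$ (and hence $x_3$ and $d_1$). Your plan is to ``isolate the $e^{-x_2}$ terms'' in \eqref{eq:operator_N:2}; but at the maximum of $x_2$ the identity $\mathcal N_2=0$ reads
\[
e^{-x_2}\bigl[\beta_2 e^{x_1+x_3}+\alpha_2 e^{x_3}\bigr]=(\mu_2+\alpha_1+\gamma_1)-\beta_1 e^{x_1},
\]
and the right-hand side has no a priori positive lower bound, so this does not give $e^{-x_2}\ge c>0$. Neither the pointwise relations nor the raw balance identities $\int\mathcal N_i=0$ control the level of $x_2$; they only control $x_1$, $x_2-x_3$, and $x_3-x_2$. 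The paper closes this by a device you do not mention: it multiplies \eqref{eq:virus_model_LlambdaN:s}--\eqref{eq:virus_model_LlambdaN:a} by $e^{x_1},e^{x_2},e^{x_3}$ respectively and integrates, obtaining the original-variable balance laws \eqref{eq:model_times_exp}. Combining \eqref{eq:model_times_exp:s} and \eqref{eq:model_times_exp:l} gives
\[
\int_0^\omega(\alpha_1+\mu_2)e^{x_2}\le\int_0^\omega b+\max(\alpha_2+\gamma_2)\int_0^\omega e^{x_3},
\]
and \eqref{eq:model_times_exp:a} gives $\int e^{x_3}\le\theta\int e^{x_3}+(\text{const})\int b$ with $\theta$ as in \eqref{eq:notation_theta}. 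Hypothesis \eqref{eq:hipot_H} is precisely $\theta<1$, which makes this inequality solvable and yields the bounds $\int e^{x_i}<\rho_i$ in \eqref{eq:model_times_exp_bounds}. The mean value theorem then produces the ``good points'' $\xi_i$ with $x_i(\xi_i)<\ln(\rho_i/\omega)$, and the oscillation bound finishes \eqref{eq:upbound}. Without this multiply-and-integrate step your chain $\{x_2,x_3\}\to x_1$ does not close, and the role of \eqref{eq:hipot_H} remains only heuristic.
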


\begin{proof}
{[(a)].} The proof is constructive and based on estimates for the system
associated to the operator equation $L=\lambda N$. Indeed, by the definition
of the operators $L$ and $N$, we notice that the system
$L(x_1(t),x_2(t),x_3(t))=\lambda N(x_1(t),x_2(t),x_3(t))$ is extensively written 
as follows
\begin{subequations}
\label{eq:virus_model_LlambdaN}
\begin{eqnarray}
\frac{dx_1}{dt}(t)
&=& \lambda\Big[b(t)\exp({-x_1(t)})
-\beta_{1}(t)\exp({x_2(t)})-\beta_{2}(t)\exp({x_3(t)})
\nonumber\\
&&
+\gamma_{1}(t)\exp\Big({x_2(t)-x_1(t)}\Big)
+\gamma_{2}(t)\exp\Big({x_3(t)-x_1(t)}\Big)- \mu_{1}(t)\Big],
\qquad\mbox{${}$}
\label{eq:virus_model_LlambdaN:s}\\
\frac{dx_2}{dt}(t)
&=&\lambda\Big[\beta_{1}(t)\exp({x_1(t)})
+ \beta_{2}(t)\exp\Big({x_1(t)+x_3(t)-x_2(t)}\Big)
\nonumber\\
&&
+\alpha_{2}(t)\exp\Big({x_3(t)-x_2(t)}\Big)
-[\mu_{2}(t)
+\alpha_{1}(t)+\gamma_{1}(t)]\Big],
\label{eq:virus_model_LlambdaN:l}\\
\frac{dx_3}{dt}(t)
&=&\lambda\Big[\alpha_{1}(t)\exp\Big({x_2(t)-x_3(t)}\Big)
-[\mu_{3}(t)+\alpha_{2}(t)+\gamma_{2}(t)]\Big].
\label{eq:virus_model_LlambdaN:a}
\end{eqnarray}
\end{subequations}
By the periodicity of $(x_1,x_2,x_3)$ we have that the integration of
\eqref{eq:virus_model_LlambdaN} on $[0,\omega]$ implies that
\begin{subequations}
\label{eq:virus_model_propo_1}
\begin{eqnarray}
&&\int_0^\omega b(t)\exp({-x_1(t)})dt= 
\int_0^\omega \Big[\mu_{1}(t)+\beta_{1}(t)\exp({x_2(t)})+\beta_{2}(t)\exp({x_3(t)})
\nonumber\\
&&
\hspace{1.0cm}
-\gamma_{1}(t)\exp\Big({x_2(t)-x_1(t)}\Big)
-\gamma_{2}(t)\exp\Big({x_3(t)-x_1(t)}\Big)\Big]dt,
\qquad\mbox{${}$}
\label{eq:virus_model_propo_1:s}\\
&&
\int_0^\omega\Big[\beta_{1}(t)\exp({x_1(t)})
+ \beta_{2}(t)\exp\Big({x_1(t)+x_3(t)-x_2(t)}\Big)
\nonumber\\
&&
\hspace{1cm}
+\alpha_{2}(t)\exp\Big({x_3(t)-x_2(t)}\Big)\Big]dt
=
\int_0^\omega\Big[
(\mu_{2}
+\alpha_{1}+\gamma_{1})(t)\Big]dt,
\hspace{.3cm}\mbox{{$ $}}
\label{eq:virus_model_propo_1:l}\\
&&
\int_0^\omega
\alpha_{1}(t)\exp\Big({x_2(t)-x_3(t)}\Big)dt
=
\int_0^\omega\Big[\mu_{3}(t)+\alpha_{2}(t)+\gamma_{2}(t)\Big]dt.
\label{eq:virus_model_propo_1:a}
\end{eqnarray}
\end{subequations}
Now, taking the modulus of the equations in the system  
\eqref{eq:virus_model_LlambdaN}, integrating the resulting equations
on $[0,\omega]$, using the fact that $\lambda\in (0,1)$
and the equations \eqref{eq:virus_model_LlambdaN}-\eqref{eq:virus_model_propo_1},
we obtain the following estimates
\begin{subequations}
\label{eq:virus_model_model_estimation}
\begin{eqnarray}
&&\int_0^\omega\left|\frac{dx_1}{dt}(t)\right| dt
< 2\int_0^\omega \Big[
\mu_{1}(t)+\beta_{1}(t)\exp({x_2(t)})+\beta_{2}(t)\exp({x_3(t)})\Big]dt,
\label{eq:virus_model_model_estimation:s}\\
&&\int_0^\omega\left|\frac{dx_2}{dt}(t)\right|dt
<2\int_0^\omega \Big[\mu_{2}(t)
+\alpha_{1}(t)+\gamma_{1}(t)\Big]dt,
\label{eq:virus_model_model_estimation:l}\\
&&\int_0^\omega\left|\frac{dx_3}{dt}(t)\right|dt
<2\int_0^\omega\Big[\mu_{3}(t)+\alpha_{2}(t)+\gamma_{2}(t)]\Big]dt.
\label{eq:virus_model_model_estimation:a}
\end{eqnarray}
\end{subequations}
We note that the right hand side of \eqref{eq:virus_model_model_estimation:l}
and \eqref{eq:virus_model_model_estimation:a} are finite as
consequence of the hypothesis \eqref{eq:hipot_H}. Meanwhile,
in order to get a finite bound of the right hand
side of \eqref{eq:virus_model_model_estimation:s} we need
some finite bounds for $\int_0^\omega\exp({x_2(t)})dt$ and $\int_0^\omega\exp({x_3(t)})dt$.

Multiplying the equations \eqref{eq:virus_model_LlambdaN:s}, 
\eqref{eq:virus_model_LlambdaN:l} and \eqref{eq:virus_model_LlambdaN:a}
by $\exp({x_1(t)}),\exp({x_2(t)})$ and $\exp({x_3(t)})$, respectively,
and then integrating on $[0,\omega]$,
we deduce that
\begin{subequations}
\label{eq:model_times_exp}
\begin{eqnarray}
&&\int_0^\omega b(t)dt= 
\int_0^\omega \Big[\mu_{1}(t)\exp(x_1(t))+\beta_{1}(t)\exp({x_1(t)+x_2(t)})
\nonumber\\
&&
\hspace{0.5cm}
+\beta_{2}(t)\exp({(x_1+x_3)(t)})
-\gamma_{1}(t)\exp\Big({x_2(t)}\Big)
-\gamma_{2}(t)\exp\Big({x_3(t)}\Big)\Big]dt,
\qquad\mbox{${}$}
\label{eq:model_times_exp:s}\\
&&
\int_0^\omega\Big[\beta_{1}(t)\exp({x_1(t)+x_2(t)})
+ \beta_{2}(t)\exp\Big({x_1(t)+x_3(t)}\Big)
+\alpha_{2}(t)\exp\Big({x_3(t)}\Big)\Big]dt
\nonumber\\
&&
\hspace{1cm}
=
\int_0^\omega\Big[
\mu_{2}(t)
+\alpha_{1}(t)+\gamma_{1}(t)\Big]\exp({x_2(t)})dt,
\label{eq:model_times_exp:l}\\
&&
\int_0^\omega
\alpha_{1}(t)\exp\Big({x_2(t)}\Big)dt
=
\int_0^\omega\Big[\mu_{3}(t)+\alpha_{2}(t)+\gamma_{2}(t)\Big]\exp({x_3(t)})dt.
\label{eq:model_times_exp:a}
\end{eqnarray}
\end{subequations}
Thus, we claim that
the positive behavior and the inequality satisfied by the coefficients 
given on hypothesis \eqref{eq:hipot_H}
and the system \eqref{eq:model_times_exp} imply the following bounds
\begin{subequations}
\label{eq:model_times_exp_bounds}
\begin{eqnarray}
&&\int_0^\omega \exp({x_1(t)})dt<
\frac{1}{\displaystyle\min_{t\in[0,\omega]}\mu_{1}(t)}
\left[
1
+\frac{\displaystyle\max_{t\in[0,\omega]}\gamma_{1}(t)}
{\displaystyle(1-\theta)\min_{t\in[0,\omega]}(\mu_{2}+\alpha_{1})(t)}
\right.
\nonumber\\
&&
\left.
\hspace{3.5cm}
+\frac{\theta\displaystyle\max_{t\in[0,\omega]}\gamma_{2}(t)}
{\displaystyle(1-\theta)\max_{t\in[0,\omega]}(\alpha_{2}+\gamma_2)(t)}
\right]
\int_0^\omega b(t)dt,
\label{eq:model_times_exp_bounds:s}\\
&&
\int_0^\omega \exp({x_2(t)})dt<
\frac{1}{(1-\theta)\displaystyle\min_{t\in[0,\omega]}
(\mu_{2}+\alpha_{1})(t)}
\int_0^\omega b(t)dt,
\label{eq:model_times_exp_bounds:l}\\
&&
\int_0^\omega \exp({x_3(t)})dt<
\frac{\theta}{(1-\theta)\displaystyle\max_{t\in[0,\omega]}
(\alpha_{2}+\gamma_{2})(t)}
\int_0^\omega b(t)dt,
\label{eq:model_times_exp_bounds:a}
\end{eqnarray}
\end{subequations}
where
\begin{eqnarray}
\theta=\max_{t\in[0,\omega]}
\left[\frac{\alpha_1}{\alpha_1+\mu_2}\right](t)
\quad
\frac{\displaystyle\max_{t\in[0,\omega]}(\alpha_2+\gamma_2)(t)}
{\displaystyle\min_{t\in[0,\omega]}(\mu_3+\alpha_2+\gamma_2)(t)}
\in ]0,1[
\cdot
\label{eq:notation_theta}
\end{eqnarray}
The fact that $\theta\in ]0,1[$ is a consequence of 
\eqref{eq:hipot_H}.
Now, we prove the inequalities 
\eqref{eq:model_times_exp_bounds:s}-\eqref{eq:model_times_exp_bounds:a}. 
Indeed, firstly we prove   \eqref{eq:model_times_exp_bounds:a}.
By \eqref{eq:model_times_exp:a},  we note that
 \begin{eqnarray}
\int_0^\omega \exp({x_3(t)})dt
&\le&\frac{1}{\displaystyle\min_{t\in[0,\omega]}(\mu_3+\alpha_2+\gamma_2)(t)}
\int_0^\omega (\mu_3+\alpha_2+\gamma_2)(t)\exp({x_3(t)})dt
\nonumber\\
&=&\frac{1}{\displaystyle\min_{t\in[0,\omega]}(\mu_3+\alpha_2+\gamma_2)(t)}
\int_0^\omega \alpha_1(t)\exp({x_2(t)})dt
\nonumber\\
&=&\int_0^\omega \frac{\alpha_1(t)}{(\alpha_1+\mu_2)(t)}
(\alpha_1+\mu_2)(t)\exp({x_2(t)})dt
\nonumber\\
&\le&
\displaystyle\frac{\displaystyle\max_{t\in[0,\omega]}
\left[\frac{\alpha_1}{\alpha_1+\mu_2}\right](t)}
{\displaystyle\min_{t\in[0,\omega]}(\mu_3+\alpha_2+\gamma_2)(t)}
\int_0^\omega 
(\alpha_1+\mu_2)(t)\exp({x_2(t)})dt,
\label{eq:cota1_exp3}
\end{eqnarray}
and by \eqref{eq:model_times_exp:s}-\eqref{eq:model_times_exp:l}
\begin{eqnarray}
&& 
\int_0^\omega(\alpha_{1}+\mu_{2})(t)\exp({x_2(t)})dt
\nonumber\\
&& 
\qquad=
\int_0^\omega\Big[\beta_{1}(t)\exp({x_1(t)+x_2(t)})
+ \beta_{2}(t)\exp\Big({x_1(t)+x_3(t)}\Big)
\nonumber\\
&& 
\qquad\qquad
+\alpha_{2}(t)\exp\Big({x_3(t)}-\gamma_{1}(t)\Big)\Big]dt
\nonumber\\
&& 
\qquad=
\int_0^\omega\Big[b(t)
+ \gamma_{2}(t)\exp\Big(x_3(t)\Big)
-\mu_{1}(t)\exp\Big({x_2(t)}\Big)
+\alpha_{2}(t)\exp\Big(x_3(t)\Big)\Big]dt
\nonumber\\
&& 
\qquad\le
\int_0^\omega b(t)dt
+\max_{t\in[0,\omega]}(\alpha_2+\gamma_2)(t)
\int_0^\omega \exp({x_3(t)})dt.
\label{eq:cota2_exp3}
\end{eqnarray}
Then, by \eqref{eq:notation_theta}, \eqref{eq:cota1_exp3} and \eqref{eq:cota2_exp3}
we deduce that
\begin{eqnarray}
\int_0^\omega \exp({x_3(t)})dt
&\le&
\displaystyle\frac{\theta}
{\displaystyle\min_{t\in[0,\omega]}(\alpha_2+\gamma_2)(t)}
\int_0^\omega b(t)dt
+\theta\int_0^\omega \exp({x_3(t)})dt,
\label{eq:cota3_exp3}
\end{eqnarray}
which implies  \eqref{eq:model_times_exp_bounds:a}.
Now,
from  \eqref{eq:cota2_exp3} and \eqref{eq:model_times_exp_bounds:a}
we get the following estimates
\begin{eqnarray}
\int_0^\omega\exp({x_2(t)})dt
&\le&
\frac{1}{\displaystyle\min_{t\in[0,\omega]}(\alpha_{1}+\mu_{2})(t)}
\int_0^\omega b(t)dt
\nonumber\\
&&
\hspace{0.5cm}
+\frac{\displaystyle\max_{t\in[0,\omega]}(\alpha_2+\gamma_2)(t)}
{\displaystyle\min_{t\in[0,\omega]}(\alpha_{1}+\mu_{2})(t)}
\int_0^\omega \exp({x_3(t)})dt
\nonumber
\\
&\le&
\left[
\frac{1}{\displaystyle\min_{t\in[0,\omega]}(\alpha_{1}+\mu_{2})(t)}
+\frac{\theta}{(1-\theta)\displaystyle
\min_{t\in[0,\omega]}(\alpha_{1}+\mu_{2})(t)
}
\right]
\int_0^\omega b(t)dt
\nonumber
\\
&=&
\frac{1}{(1-\theta)\displaystyle\min_{t\in[0,\omega]}(\alpha_{1}+\mu_{2})(t)
}
\int_0^\omega b(t)dt
\end{eqnarray}
and we prove   \eqref{eq:model_times_exp_bounds:l}.  
The inequality
\eqref{eq:model_times_exp_bounds:s}
is a consequence 
of \eqref{eq:model_times_exp:s} and the estimates
\eqref{eq:model_times_exp_bounds:l} and \eqref{eq:model_times_exp_bounds:a}, since
\begin{eqnarray}
&&\int_0^\omega\exp({x_1(t)})dt
\nonumber\\
&&
\qquad
\le
\frac{1}{\displaystyle\min_{t\in[0,\omega]}\mu_{1}(t)}
\int_0^\omega \mu_{1}(t)\exp({x_1(t)})
+\beta_{1}(t)\exp({(x_1+x_2)(t)})
\nonumber\\
&&
\hspace{3.5cm}
+\beta_{2}(t)\exp({(x_1+x_3)(t)})
dt
\nonumber
\\
&&
\qquad\le
\frac{1}{\displaystyle\min_{t\in[0,\omega]}\mu_{1}(t)}
\left[
\int_0^\omega b(t)dt
+\int_0^\omega(\gamma_{1}(t)\exp({x_2(t)})+\gamma_{2}(t)\exp({x_3(t)}))dt
\right]
\nonumber
\\
&&\qquad
\le
\frac{1}{\displaystyle\min_{t\in[0,\omega]}\mu_{1}(t)}
\nonumber\\
&&
\qquad
\times
\left[
1
+\frac{\displaystyle\max_{t\in[0,\omega]}\gamma_{1}(t)}
{\displaystyle(1-\theta)\min_{t\in[0,\omega]}(\mu_{2}+\alpha_{1})(t)}
+\frac{\theta\displaystyle\max_{t\in[0,\omega]}\gamma_{2}(t)}
{\displaystyle(1-\theta)\max_{t\in[0,\omega]}(\alpha_{2}+\gamma_2)(t)}
\right]
\int_0^\omega b(t)dt.
\nonumber
\end{eqnarray}
Thus the inequalities in \eqref{eq:model_times_exp_bounds} are proved.

Let us introduce some notation. Given
$f$, a positive real valued bounded function on $[0,\omega]$,   
we introduce the following notation 
\begin{eqnarray}
\overline{f}=\frac{1}{\omega}\int_0^\omega f(t)dt,
\quad
f^\bot =\min_{x\in [0,\omega]} f(t),
\quad
\mbox{and}
\quad
f^\top =\max_{x\in [0,\omega]} f(t).
\label{eq:notationavminmax}
\end{eqnarray}
Then by \eqref{eq:model_times_exp_bounds}, we follow
that there are the positive constants $\rho_1,\rho_2$ and $\rho_3$ given by
\begin{eqnarray}
\rho_1&=&\frac{\omega \overline{b}}{u_1^\bot}
\left[
1+\frac{\gamma_1^\top}{(1-\theta)(\mu_{2}+\alpha_{1})^\bot}
+\frac{\theta\gamma_2^\top}{(1-\theta)(\alpha_{2}+\gamma_2)^\top}
\right],
\label{eq:rho_1}\\
\rho_2&=& \frac{\omega \overline{b}}{(1-\theta)(\mu_{2}+\alpha_{1})^\bot},
\label{eq:rho_2}\\
\rho_3&=& \frac{\theta\omega \overline{b}}{(1-\theta)(\alpha_{2}+\gamma_2)^\top},
\label{eq:rho_3}
\end{eqnarray}
such that 
\begin{eqnarray}
\int_0^\omega\exp(x_i(t)) dt < \rho_i,\quad i=1,2,3.
\label{eq:expbounds_ii} 
\end{eqnarray}
Then, by application of the intermediate value for integrals 
in \eqref{eq:expbounds_ii} we get that
there are $\xi_i\in [0,\omega]$ such that $x_i(\xi_i)<\ln(\rho_i/\omega)$.
Thus, by the fundamental theorem of calculus 
and \eqref{eq:virus_model_model_estimation} and
\eqref{eq:expbounds_ii}
we deduce that there are the positive constants
$d_1,d_2$ and $d_3$ defined as follows
\begin{eqnarray}
d_1=2[\omega \overline{\mu_1}+\beta_1^\top\delta_2+\beta_2^\top\delta_3],
\;
d_2=2\omega\; \overline{\mu_2+\alpha_1+\gamma_1},
\;
d_3=2\omega \;\overline{\mu_3+\alpha_2+\gamma_2},
\label{eq:d_123}
\end{eqnarray}
such that
\begin{eqnarray}
x_i(t)=x(\xi_i)+\int_{\xi_i}^t\frac{dx}{dt}(t)dt
<
\ln(\rho_i/\omega)+\int_{\xi_i}^t\frac{dx}{dt}(t)dt
<\ln(\rho_i/\omega)+d_i,
\end{eqnarray}
which implies that the inequality \eqref{eq:upbound} is valid.

The proof of \eqref{eq:lowbound} is given as follows.
For $i=1,2,3$, let us denote by $\tau_i\in [0,\omega]$ the points where $x_i$
has a minimum, i.e.
$\displaystyle x_1(\tau_i)=\min_{t \in [0,\omega]}x_i(t).$
Then, from \eqref{eq:virus_model_LlambdaN} we have that
\begin{subequations}
\label{eq:vir_mod_min}
\begin{eqnarray}
0&=& b(\tau_1)\exp({-x_1(\tau_1)})
-\beta_{1}(\tau_1)\exp({x_2(\tau_1)})-\beta_{2}(\tau_1)\exp({x_3(\tau_1)})
\nonumber\\
&&
+\gamma_{1}(\tau_1)\exp\Big({x_2(\tau_1)-x_1(\tau_1)}\Big)
\nonumber\\
&&
+\gamma_{2}(\tau_1)\exp\Big({x_3(\tau_1)-x_1(\tau_1)}\Big)- \mu_{1}(\tau_1),
\qquad\mbox{${}$}
\label{eq:vir_mod_min:s}\\
0
&=&\beta_{1}(\tau_2)\exp({x_1(\tau_2)})
+ \beta_{2}(\tau_2)\exp\Big({x_1(\tau_2)+x_3(\tau_2)-x_2(\tau_2)}\Big)
\nonumber\\
&&
+\alpha_{2}(\tau_2)\exp\Big({x_3(\tau_2)-x_2(\tau_2)}\Big)
-[\mu_{2}(\tau_2)
+\alpha_{1}(\tau_2)+\gamma_{1}(\tau_2)],
\label{eq:vir_mod_min:l}\\
0
&=&\alpha_{1}(\tau_3)\exp\Big({x_2(\tau_3)-x_3(\tau_3)}\Big)
-[\mu_{3}(\tau_3)+\alpha_{2}(\tau_3)+\gamma_{2}(\tau_3)].
\label{eq:vir_mod_min:a}
\end{eqnarray}
\end{subequations}
Now, using the notation  \eqref{eq:notationavminmax}, from 
\eqref{eq:upbound} and \eqref{eq:vir_mod_min:s} we deduce
the following inequalities
\begin{eqnarray*}
b^\bot 
&<& 
b(\tau_1)+\gamma_1(\tau_1)\exp(x_2(\tau_1))+\gamma_2(\tau_1)\exp(x_3(\tau_1))
\\
&<& 
\Big[\mu_1(\tau_1)+\beta_1(\tau_1)\exp(x_2(\tau_1))+\beta_2(\tau_1)\exp(x_3(\tau_1))\Big]
\exp(x_1(\tau_1))
\\
&<& 
\Big[\mu_1^\top+\beta_1^\top\exp(x_2(\tau_1))+\beta_2^\top\exp(x_3(\tau_1))\Big]
\exp(x_1(\tau_1))
\\
&<& 
\Big[\mu_1^\top+\beta_1^\top\frac{\rho_2}{\omega}\exp(d_2)
+\beta_2^\top\frac{\rho_3}{\omega}\exp(d_3)\Big]
\exp(x_1(\tau_1)).
\end{eqnarray*}
Then 
\begin{eqnarray}
\exp(x_1(\tau_1))>\frac{\omega b^\bot}
 {\omega\mu_1^\top+\beta_1^\top\rho_2\exp(d_2)+\beta_2^\top \rho_3\exp(d_3)}.
\label{eq:low_bound_exp_1}
\end{eqnarray}
Similarly, from \eqref{eq:vir_mod_min:l} and \eqref{eq:vir_mod_min:a}
we get 
\begin{eqnarray}
\exp(x_2(\tau_2))>\frac{\beta_1^\bot}{(\mu_2+\alpha_1+\gamma_1)^\top},
 \quad\mbox{and}\quad
\exp(x_3(\tau_3))>\frac{\alpha_1^\bot}{(\mu_3+\alpha_3+\gamma_2)^\top}.
\label{eq:low_bound_exp_2}
\end{eqnarray}
Thus, we have that there are  $\delta_1,\delta_2$ and $\delta_3$
defined by
\begin{eqnarray}
 \delta_1&=&\frac{\omega b^\bot}
 {\omega\mu_1^\top+\beta_1^\top\rho_2\exp(d_2)+\beta_2^\top \rho_3\exp(d_3)},
 \label{eq:delta_1}
 \\
  \delta_2&=&\frac{\beta_1^\bot}{(\mu_2+\alpha_1+\gamma_1)^\top},
 \label{eq:delta_2}
 \\
  \delta_3&=&\frac{\beta_1^\bot}{(\mu_2+\alpha_1+\gamma_1)^\top},
 \label{eq:delta_3}
\end{eqnarray}
such that the estimate \eqref{eq:lowbound} is satisfied.

\vspace{1cm}
\noindent
[(b)]. If $(x_1,x_2,x_3)\in\Ker L$, then by \eqref{eq:kenelL} we have that 
$(x_1,x_2,x_3)=(s_0,l_0,a_0)\in\R^3$ is constant.
To fix ideas we consider that $(x_1,x_2,x_3)=(s_0,l_0,a_0)$.
Then, by \eqref{eq:qn_operator}
the condition $QN((x_1,x_2,x_3)^T)=QN((s_0,l_0,a_0)^T)=0$
implies that
\begin{subequations}
\label{eq:vir_mod_kerL}
\begin{eqnarray}
0&=& \overline{b}\exp({-s_0})
-\overline{\beta_{1}}\exp({l_0})-\overline{\beta_{2}}\exp({a_0})
+\overline{\gamma_{1}}\exp\Big({l_0-s_0}\Big)
+\overline{\gamma_{2}}\exp\Big({a_0-s_0}\Big)- \overline{\mu_{1}},
\qquad\qquad\mbox{${}$}
\label{eq:vir_mod_kerL:s}\\
0
&=&\overline{\beta_{1}}\exp({s_0})
+ \overline{\beta_{2}}\exp\Big({s_0+a_0-l_0}\Big)
+\overline{\alpha_{2}}\exp\Big({a_0-l_0}\Big)
-\overline{\mu_{2}+\alpha_{1}+\gamma_{1}},
\label{eq:vir_mod_kerL:l}\\
0
&=&\overline{\alpha_{1}}\exp\Big({l_0-a_0}\Big)
-\overline{\mu_{3}+\alpha_{2}+\gamma_{2}}.
\label{eq:vir_mod_kerL:a}
\end{eqnarray}
\end{subequations}
Then, from the system \eqref{eq:vir_mod_kerL}
and by similar to the given in the case (a), we 
can prove that the an inequality of the type 
\eqref{eq:expbounds_ii} is also valid 
in this case, i.e.
\begin{eqnarray*}
\exp(s_0)<\frac{\rho_1}{\omega},
\quad
\exp(l_0)<\frac{\rho_2}{\omega}
\quad
\mbox{and}
\quad
\exp(a_0)<\frac{\rho_3}{\omega}\cdot
\end{eqnarray*}
which implies \eqref{eq:upbound_ker}. Moreover, by using the fact that 
$\Ker L\subset\dom L$ we have that the estimates 
\eqref{eq:low_bound_exp_1}-\eqref{eq:low_bound_exp_1} are valid for 
$(s_0,l_0,a_0)$, i.e.
\begin{eqnarray*}
 \exp(s_0)>\delta_1,
\quad
\exp(l_0)>\delta_2
\quad
\mbox{and}
\quad
\exp(a_0)>\delta_3\cdot
\end{eqnarray*}
Thus, the inequality \eqref{eq:lowbound_ker} is
also satisfied.
\end{proof}

\begin{lemma}
\label{lem:c1_hasta_c3}
Let us consider $X$ and $Y$ are the Banach spaces defined on \eqref{eq:X:space};
$\Omega\subset X$ the open ball with radius
\begin{eqnarray}
 h=\sum_{i=1}^3\max\left\{
 \Big|\ln (\delta_i)\Big|,
 \left|\ln \left(\frac{\rho_i}{\omega_i}\right)\right|+d_i
 \right\}
\end{eqnarray}
with $\delta_i,\rho_i$ and $d_i$ defined on \eqref{eq:delta_1}-\eqref{eq:delta_3},
\eqref{eq:rho_1}-\eqref{eq:rho_3}, and \eqref{eq:d_123}, respectively. Let
$Q$ defined by \eqref{eq:operators_P_Q}, $L:X\to Y$ and $N:X\to Y$ the operators
defined on \eqref{eq:operator_L} and \eqref{eq:operator_N},
respectively. Moreover, assume that the hypothesis \eqref{eq:hipot_H} is
satisfied. Then, $L$ and $N$ satisfy the properties (C$_1$)-(C$_3$) of 
Theorem~\ref{teo:teocontinuacion}.

\end{lemma}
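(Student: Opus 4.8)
The plan is to verify the three hypotheses (C$_1$)--(C$_3$) of the continuation theorem (Theorem~\ref{teo:teocontinuacion}) one at a time, relying on the a priori estimates already established in Theorem~\ref{teo:contradiction} and on the construction of $\Omega$ as the open ball of radius $h$. The key observation is that $h$ has been defined precisely so that every point satisfying the bounds of Theorem~\ref{teo:contradiction} lies in the \emph{interior} of $\Omega$; consequently, no such point can sit on the boundary $\partial\Omega$. This is the mechanism that drives the first two verifications by contradiction.

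First, for (C$_1$), I would argue by contradiction. Suppose there exist $\lambda\in(0,1)$ and $(x_1,x_2,x_3)\in\partial\Omega\cap\dom L$ with $L(x_1,x_2,x_3)=\lambda N(x_1,x_2,x_3)$. Then part (a) of Theorem~\ref{teo:contradiction} applies, yielding $\ln(\delta_i)<x_i(t)<\ln(\rho_i/\omega)+d_i$ for all $t\in[0,\omega]$ and $i=1,2,3$. Taking maxima over $t$ and summing the resulting componentwise bounds gives $\|(x_1,x_2,x_3)\|<h$, since each term is strictly dominated by $\max\{|\ln(\delta_i)|,\,|\ln(\rho_i/\omega)|+d_i\}$. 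This places the point strictly inside $\Omega$, contradicting $(x_1,x_2,x_3)\in\partial\Omega$. Hence (C$_1$) holds. For (C$_2$) the argument is identical in spirit but uses part (b) of Theorem~\ref{teo:contradiction}: if $(x_1,x_2,x_3)\in\partial\Omega\cap\Ker L$ satisfied $QN(x_1,x_2,x_3)=0$, then the constant vector $(s_0,l_0,a_0)$ would obey the sharper bounds \eqref{eq:upbound_ker}--\eqref{eq:lowbound_ker}, forcing $\|(x_1,x_2,x_3)\|<h$ and again contradicting membership in $\partial\Omega$.

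The genuinely substantive step is (C$_3$), the nonvanishing of the Brouwer degree $\mathrm{deg}(JQN,\Omega\cap\Ker L,0)$. Identifying $\Ker L\cong\mathbb{R}^3$ via \eqref{eq:kenelL} and choosing the isomorphism $J:\ima Q\to\Ker L$ to be the identity on $\mathbb{R}^3$, the map $JQN$ restricted to $\Ker L$ becomes the vector field $\Phi:\mathbb{R}^3\to\mathbb{R}^3$ whose components are the averaged right-hand sides appearing in \eqref{eq:vir_mod_kerL}, evaluated at a constant argument $(s_0,l_0,a_0)$. The plan is to compute this degree via a homotopy: I would connect $\Phi$ to a simpler map $\Phi_0$ whose zeros are explicit and whose Jacobian sign is computable through Definition~\ref{def:degree}. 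Throughout the homotopy one must guarantee that no zero escapes to $\partial(\Omega\cap\Ker L)$; this is exactly where hypothesis~\eqref{eq:hipot_H} re-enters, because the same a priori bounds from Theorem~\ref{teo:contradiction}(b) confine all zeros of every homotopy level to the interior of the ball, so the degree is homotopy-invariant and well defined.

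I expect (C$_3$) to be the main obstacle, and within it the delicate point is constructing an admissible homotopy for which the terminal vector field has a \emph{unique} zero in $\Omega\cap\Ker L$ with a computable, nonzero Jacobian sign. The natural candidate is to homotope away the cross-coupling exponential terms (those mixing $s_0,l_0,a_0$) while retaining the dominant decay terms, reducing $\Phi$ to a triangular or diagonal-dominant system whose single equilibrium can be located explicitly; by Definition~\ref{def:degree} the degree is then $\sgn J_{\Phi_0}$ at that equilibrium, which the monotonicity of the exponentials renders nonzero. The care required is purely in checking admissibility of the homotopy on the boundary at every intermediate parameter value, so that the a priori estimates of Theorem~\ref{teo:contradiction} continue to apply and prevent zeros from reaching $\partial\Omega$. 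Once (C$_1$)--(C$_3$) are all verified, Theorem~\ref{teo:teocontinuacion} yields a solution of $Lx=Nx$ in $\dom L\cap\overline{\Omega}$, completing the lemma.
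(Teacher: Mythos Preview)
Your proposal is correct and follows essentially the same route as the paper: (C$_1$) and (C$_2$) are handled by contradiction via parts (a) and (b) of Theorem~\ref{teo:contradiction}, and (C$_3$) is obtained by a linear homotopy that scales away exactly the cross-coupling exponential terms you describe, leaving a triangular system with a unique zero whose Jacobian sign the paper computes explicitly (it equals $-1$). The admissibility of the homotopy on $\partial\Omega\cap\Ker L$ is checked in the paper by the same a priori estimates you invoke, so your plan matches the paper's argument in all essential respects.
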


\begin{proof}
The proof of (C$_1$) and (C$_2$) is given by contradiction argument and
the proof of (C$_3$) is constructive and by using the invariance property
of the topological degree. More precisely, we have that  
\begin{enumerate}
 \item[(C$_1$)] Let us assume that 
there are $\lambda\in ]0,1[$
and $(x_1,x_2,x_3)\in \partial\Omega\cap\dom L$
such that $L(x_1,x_2,x_3)=\lambda L(x_1,x_2,x_3).$
Then, by application of  Theorem~\ref{teo:contradiction}-(a)
we deduce that $(x_1,x_2,x_3)\in \interior \Omega$ which is a
contradiction to the assumption that $(x_1,x_2,x_3)\in \partial\Omega.$

  \item[(C$_2$)] Let us assume that 
there is $(x_1,x_2,x_3)\in \partial\Omega\cap\Ker L$
such that $QN(x_1,x_2,x_3)=0.$
Then, by application of  Theorem~\ref{teo:contradiction}-(b)
we deduce that $(x_1,x_2,x_3)\in \interior \Omega$ which is a
contradiction to the assumption that $(x_1,x_2,x_3)\in \partial\Omega.$

\item[(C$_3$)] Let us consider  the mapping
 $\Phi:\dom L \times [0,1]\to X$ defined as follows
\begin{eqnarray*}
  \Phi(x_1,x_2,x_3,\epsilon)&=&\left[
   \begin{array}{l}
     \overline{b}\exp(-x_1)
     -\overline{\beta_1}\exp(x_2)
     -\overline{\beta_2}\exp(x_3)
     -\overline{\mu_1} \\
     \overline{\beta_1}\exp(x_1)
     - [\overline{\mu_2+\alpha_1+\gamma_1}]\\
     \overline{\alpha_1}\exp(x_2-x_3)-
     [\overline{\mu_3+\alpha_2+\gamma_2}] \\
   \end{array}
 \right]
 \\
 &&
 +\varepsilon \left[
            \begin{array}{l}
              \overline{\gamma_1}\exp(x_2-x_1)
              +\overline{\gamma_2}\exp(x_3-x_1) \\
              \overline{\beta_2}\exp(x_1+x_3-x_2)
              +\overline{\alpha_2}\exp(x_3-x_2) \\
              0 \\
            \end{array}
          \right].
\end{eqnarray*}
We prove that $\Phi(x_{1},x_{2},x_{3},\epsilon)\neq 0$ when
$(x_{1},x_{2},x_{3})^{T}\in
\partial \Omega \cap \Ker L$. Here, by  \eqref{eq:kenelL}
recall that 
$(x_{1},x_{2},x_{3})^T=(s_0,l_0,a_0)\in\R^3$ is
a constant vector.
Now, assuming that the conclusion is not true,
then the constant vector $(s_0,l_0,a_0)^{T}$ with
$\|(s_0,l_0,a_0)\|=h$ satisfies
$\Phi(s_0,l_0,a_0,\epsilon)=0$, that is,
\begin{eqnarray*}
\qquad\qquad
0&=& \overline{b}\exp(-s_0)-\overline{\beta_{1}}\exp(l_0)-
\overline{\beta_{2}}\exp(a_0)
\\
&&-\overline{\mu_{1}}
+\varepsilon[\overline{\gamma_{1}}\exp(l_0-s_0)
+\overline{\gamma_{2}}\exp(a_0-s_0)], \\
 0&=& \overline{\beta_{1}}\exp(s_0)-[\overline{\mu_{2}
   +\alpha_{1}+\gamma_{1}}]+\varepsilon[
\overline{\beta_{2}}\exp(s_0+a_0-l_0)
\\
&&
+\overline{\alpha_{2}}\exp(a_0-l_0)],\\
0 &=&  \overline{\alpha_{1}}\exp(l_0-a_0)-[\overline{\mu_{3}+\alpha_{2}+\gamma_{2}}]. 
\end{eqnarray*}
Then, by similar arguments 
to the proof Theorem~\ref{teo:contradiction}-(a)
we obtain that $\|(s_0,l_0,a_0)^{T}\|<
 h$, which contradicts to the assumption that $\|(s_0,l_0,a_0)^{T}\|=h$.

By the Homotopy Invariance Theorem of Topology Degree,
 taking 
  $J=I:\ima Q \to \Ker L$  such that 
  $ (x_{1},x_{2},x_{3})^{T}\mapsto (x_{1},x_{2},x_{3})^{T}$,
  using the fact that the system
 \begin{eqnarray*}
 \hspace{1cm}
     \overline{b}e^{-x_{1}(t)}-
\overline{\mu_{1}}-\overline{\beta_{1}}e^{x_{2}(t)}-
\overline{\beta_{2}}e^{x_{3}(t)}
+\overline{\gamma_{1}}e^{(x_{2}-x_{1})(t)}+\overline{\gamma_{2}}e^{(x_{3}-x_{1})(t)}
&=&0 \\
   \overline{\beta_{1}}e^{x_{1}(t)}+
\overline{\beta_{2}}e^{x_{1}(t)+x_{3}(t)-x_{2}(t)}
+\overline{\alpha_{2}}e^{x_{3}(t)-x_{2}(t)}
-[\overline{\mu_{2}+\alpha_{1}+\gamma_{1}}]&=&0\\
   \overline{\alpha_{1}}e^{x_{2}(t_{3})-x_{3}(t_{3})}
   -[\overline{\mu_{3}+\alpha_{2}+\gamma_{2}}]&=&0,
 \end{eqnarray*}
 has a unique solution 
 $(x_{1}^{\star},x_{2}^{\star},x_{3}^{\star})^{T}\in \partial\Omega\cap \Ker L,$
 and by definition~\ref{def:degree},  we have that
\begin{eqnarray*}
\quad
&&\mathrm{deg}(JQN(x_{1},x_{2},x_{3})^{T},\Omega \cap \Ker L, (0,0,0)^{T})
\\
&&
\quad
=\mathrm{deg}(\Phi(x_{1},x_{2},x_{3},1),\Omega \cap \Ker L, (0,0,0)^{T})
\\
&&
\quad
=\mathrm{deg}\Big((\overline{b}e^{-x_{1}}-\overline{\beta_{1}}e^{x_{2}}-
\overline{\beta_{2}}e^{x_{3}}-\overline{\mu_{1}},\overline{\beta_{1}}e^{x_{1}}
\\
&&
\qquad
-[\overline{\mu_{2}+\alpha_{1}+\gamma_{1}}],\overline{\alpha_{1}}e^{x_{2}-x_{3}}
-[\overline{\mu_{3}+\alpha_{2}+\gamma_{2}}])^{T},\Omega
\cap \Ker L, (0,0,0)^{T}\Big)
\\
&&
\quad
=\mathrm{sgn}\left|
      \begin{array}{ccc}
        -\overline{b}e^{-x_{1}^{\star}} & -\overline{\beta_{1}}e^{x_{2}^{\star}} & -
\overline{\beta_{2}}e^{x_{3}^{\star}} \\
        \overline{\beta_{1}}e^{x_{1}^{\star}} & 0 & 0 \\
        0 & \overline{\alpha_{1}} e^{x_{2}^{\star}-x_{3}^{\star}}& - \overline{\alpha_{1}} e^{x_{2}^{\star}-x_{3}^{\star}} \\
      \end{array}
    \right|
\\
&&
\quad
=\mathrm{sgn}\Big[-\Big(
\overline{\alpha_{1}}\;\overline{\beta_{1}}\;\overline{\beta_{2}}e^{x_{1}^{\star}
+x_{2}^{\star}}+\overline{\alpha_{1}}\;\overline{\beta_{1}}^{2}e^{x_{1}^{\star}
+2x_{2}^{\star}-x_{3}^{\star}}
\Big)\Big]
\\
&&
\quad
=-1,
\end{eqnarray*}
which implies that 
$\mathrm{deg}(JQN,\Omega \cap \Ker L, 0)\not=0$ and prove that  (C3) is valid.
\end{enumerate}
Thus, the properties (C$_1$)-(C$_3$) 
of the Theorem~\ref{teo:teocontinuacion} are valid for
the given operators.
\end{proof}

\subsubsection{Proof of Theorem~\ref{teo:exitencia_equiv}}
By Lemmata~\ref{lem:L_is_fredholm}, \ref{lem:N_is_contin}
\ref{lem:N_is_L_compact} and \ref{lem:c1_hasta_c3} we have that
the hypotheses of the Theorem~\ref{teo:teocontinuacion}
are valid. Then, the
operator equation \eqref{eq:operator_L} has at least
one solution in  $\dom L\cap\overline{\Omega}\subset X$
and naturally this fact implies that the system  \eqref{eq:virus_model_equiv}
has at least one  $\omega-$periodic solution.

\subsection{Final remarks of the proof of  Theorem~\ref{teo:main}}

If the hypothesis \ref{eq:hipot_H} is valid, by
Theorem~\ref{teo:exitencia_equiv} we have that 
the system  \eqref{eq:virus_model_equiv}
has at least one $\omega-$periodic solution. Thus, by 
applying the Theorem~\ref{prop:sist_eqiv}, we conclude
the proof of Theorem~\ref{teo:main}.

\section*{Acknowledgments}

A. Coronel and F. Huancas thanks for the  support of
research projects
 DIUBB GI 153209/C and DIUBB GI 152920/EF at 
Universidad del B{\'\i}o-B{\'\i}o, Chile.
M. Pinto thanks for the  support of FONDECYT 1120709.

 \end{document}